\newcommand{\GL}{\mathop{\mathrm{GL}}}
\newcommand{\Sp}{\mathop{\mathrm{Sp}}}
\newcommand{\GU}{\mathop{\mathrm{GU}}}
\newcommand{\Aut}{\mathop{\mathrm{Aut}}}
\newcommand{\Sym}{\mathop{\mathrm{Sym}}}
\newcommand{\GF}{\mathop{\mathrm{GF}}}
\newcommand{\AG}{\mathop{\mathrm{AG}}}
\newcommand{\tp}{\mathop{\mathrm{tp}}}
\newcommand{\SL}{\mathop{\mathrm{SL}}}
\def\DD{{\mathcal{D}}}
\newcommand{\XX}{\mathop{\mathcal{X}}}
\def\LL{{\mathcal{L}}}
\newcommand{\PP}{\mathop{\mathcal{P}}}
\def\BB{{\mathcal{B}}}
\def\Aut{{\rm Aut}}
\def\Sym{{\rm Sym}}
\def\calD{{\mathcal{D}}}
\def\calL{{\mathcal{L}}}
\def\calP{{\mathcal{P}}}
\def\calB{{\mathcal{B}}}
\def\De{\Delta}
\def\Om{\Omega}
\def\be{\beta}
\def\al{\alpha}
\def\ga{\gamma}
\def\de{\delta}
\def\si{\sigma}
\def\lam{\lambda}
\def\vp{\varphi}
\newtheorem{theorem}{Theorem}[section]
\newtheorem{lemma}[theorem]{Lemma}
\newtheorem{proposition}[theorem]{Proposition}
\newtheorem{construction}[theorem]{Construction}
\newtheorem{question}[theorem]{Question}
\newenvironment{pf}{\prepf\rm}{\endprepf}
\title{Constructing flag-transitive,\\ point-imprimitive designs}
\author{Peter J. Cameron\\Mathematical Institute\\University of St Andrews\\
North Haugh\\St Andrews, Fife, UK\\and\\
Cheryl E. Praeger\\School of Mathematics and Statistics\\University of
Western Australia\\Crawley, WA 6009, Australia}
\begin{document}
\maketitle

\begin{abstract}
We give a construction of a family of designs with a specified point-partition,
and determine the subgroup of automorphisms leaving invariant the 
point-partition. We give necessary and sufficient conditions for a design in 
the family to possess a flag-transitive group of automorphisms preserving the
specified point-partition. We give examples of flag-transitive designs
in the family, including a new symmetric $2$-$(1408,336,80)$ design with 
automorphism group $2^{12}:((3\cdot\mathrm{M}_{22}):2)$, and a construction
of one of the families of the symplectic designs (the designs $S^-(n)$)
exhibiting a flag-transitive, point-imprimitive automorphism group.
\end{abstract}

\section{Introduction}

In 1982, Sane~\cite{Sane} gave a construction of symmetric
$2$-$(q^2(q+2),q(q+1),q)$ designs from $q+2$ affine planes of order $q$. In
this paper we will give a generalisation of this construction. Our
construction is more general in three respects: first, we replace the
affine designs by general resolvable $2$-designs; second, we put a
symmetric design on the set indexing the resolvable designs; and third,
we use a transversal design to select the blocks.

The point sets of the designs we construct come with a natural partition.
A feature of our construction is that we are able to control the
group of automorphisms of the constructed design fixing this partition.
Thus, we can construct some new examples of
point-imprimitive, flag-transitive symmetric $2$-designs, including a
$2$-$(1408,336,80)$ design with automorphism group 
$2^{12}:((3\cdot\mathrm{M}_{22}):2)$. In addition, we show that the symmetric
designs $S^-(n)$ of symplectic type admit flag-transitive, point-imprimitive
groups of automorphisms which can be constructed using our technique.

We note that constructions of symmetric designs with special cases of the
parameters considered here, and related strongly regular graphs, have a long
history, starting with the work of Ahrens and Szekeres~\cite{as} in 1969.
Wallis~\cite{wallis} used affine designs to construct symmetric designs with
polarities, giving rise to strongly regular graphs; Fon-Der-Flaass obtained
more than exponentially many non-isomorphic strongly regular graphs by varying
this construction. Our interest is in the automorphism groups of the designs.

\section{A Sane-type construction}
\label{s:sane}

In this and the next section we present and discuss a construction for
symmetric $2$-designs based on a non-trivial point partition.
The general case of the construction, given in Section~\ref{s:gencon},
is a development of the following special construction, which is
similar to one given by Sharad S. Sane \cite{Sane}. 
Other developments of Sane's construction appear in the literature, see
for example, \cite{GhargeSane}.

\begin{construction}\label{con:qaff}{\rm
Let $\calD_0=(\De_0,\calL_0)$ be the desarguesian affine plane ${\rm AG}(2,q)$ of order $q$, and let $\calP_0=\{P_1,\dots,P_{q+1}\}$
denote its set of parallel classes of lines.
Let $\De_1:=\{1,\dots,q+2\}$. For each $\be\in\De_1$, let $\psi_\be:\calP_0\rightarrow \De_1\setminus\{\be\}$, be a bijection chosen in such a way that, for each $P\in\calP_0$ and each $\al\in\De_1$, there is a unique $\be$ such that $\psi_\be(P)=\al$. Moreover, 
for each $P\in\calP_0$, $\ga\in\De_0$, and $\be\in\De_1$, let $\ell_P(\ga)$ denote the line of $P$ containing $\ga$, and set $B(\ga,\be):=\{(\de,\psi_\be(P))\,|\,P\in\calP_0, \de\in \ell_P(\ga)\}$, or in other words,
\[
 B(\ga,\be)=\bigcup_{P\in\PP_0}\ell_P(\ga)\times\{\psi_\be(P)\}.
\]

We define the design $\calD=(\Om,\calB)$ to have point set
$\Om=\De_0\times\De_1$ and block set $\calB$ the disjoint union 
of the sets $\calB_\be=\{B(\ga,\be)\,|\,\ga\in \De_0\}$, for $\be\in\De_1$.
}
\end{construction}

The design $\calD$ has $|\De|=q^2(q+2)$ points, and $|\calB|=
q^2(q+2)$ blocks, each block having size $q(q+1)$. To see that $\calD$ is a
$2$-design we argue as follows.
If $(\de,\al), (\de',\al')\in\Om$ have distinct $\De_1$-entries (that is, if
$\al\ne\al')$), then there are exactly $q$ points
$\be\in\De_1\setminus\{\al,\al'\}$, and for each such $\be$, the
$\De_0$-points $\de,\de'$ lie in unique lines $\ell,\ell'$ of $P, P'$ respectively, where $\al=\psi_\be(P)$ and $\al'=\psi_\be(P')$. 
Since $P\ne P'$, the lines $\ell,\ell'$ intersect in a unique point, say $\ga(\be)$, and hence $(\de,\al), (\de',\al')\in\Om$
lie in $B(\ga(\be),\be)$. Thus these two points of $\Om$ lie in exactly $q$ common blocks. 
On the other hand if $(\de,\al), (\de',\al)\in\Om$ have the same 
$\De_1$-entry, then $\de,\de'$ lie in a unique line
$\ell\in\calL_0$ and hence $\de,\de'$ determine the unique 
parallel class $P\in\calP_0$ containing $\ell$. Also, there is a unique $\be$ such that $\psi_\be(P)=\al$. Therefore, there are
exactly $q$ blocks containing $(\de,\al), (\de',\al)$, namely
the sets $B(\ga,\be)$ such that $\ga\in\ell$.

Thus $\calD$ is a symmetric $2$-$(q^2(q+2),q(q+1),q)$ design.
Moreover, both the point set $\Om$ and the block set $\calB$
fall into $q+2$ classes, each of size $q$. Sane refers to these
designs as \emph{quasi-affine}.

For small values of
the parameter $q$, it turns out that the bijections $\psi_\be$
can be chosen so that $\calD$ admits a flag-transitive subgroup of
automorphisms preserving these partitions of the point set
and block set, namely for $q=2, 3, 4$, see \cite{p,r, lpr}.

\section{A general imprimitive design construction}
\label{s:gencon}

Here we present our design construction in which we extract, and generalise,
several critical features of the Sane-type construction given in
Section~\ref{s:sane}. Our objective is to construct a class of designs in which 
an identifiable sub-class are symmetric $2$-designs, and for which it is 
possible to determine those automorphisms that leave invariant the point
partition used in the construction.  We briefly describe three 
important features of the construction in Section~\ref{s:sane}. 

\begin{enumerate}
\item The first ingredient of our construction in Section~\ref{s:sane}
was an affine plane, and we will replace it here with a general resolvable 
$2$-design $\DD_0=(\De_0,\LL_0)$ with block set partitioned into a set $\PP_0$
of parallel classes. 

\item Next, the set of parallel classes of lines of 
the affine plane was identified in several different ways
with subsets of a second set $\De_1$, namely via bijections
$\psi_\be:\PP_0\rightarrow \De_1\setminus\{\be\}$, for each $\be\in\De_1$.
We generalise this by choosing, as the images of the maps, the blocks
of an arbitrary symmetric $2$-design $\DD_1$ with point set $\De_1$. (Note that 
the subsets $\De_1\setminus\{\be\}$, for  $\be\in\De_1$, form the blocks of a 
degenerate symmetric $2$-design on $\De_1$.)

\item Finally the
blocks in the construction in Section~\ref{s:sane} were determined by the maps 
$\psi_\be$ and by choosing, for each parallel class $P\in\PP_0$, 
a line in $P$. In Section~\ref{s:sane}, the choice 
was the line of $P$ containing a fixed point $\ga\in\De_0$, using the 
same point for each parallel class. For our general construction we also
require a method of selecting a block 
from each of the parallel classes in certain given subsets of $\PP_0$. 
For this we introduce
a transversal design $\DD_2=(\De_2,\LL_2)$ with point set $\De_2$ 
equal to the block set 
$\LL_0$ of $\DD_0$, and hence partitioned naturally into `groups', namely the
parallel classes. (The phrase \emph{transversal design} has more than one
meaning. We intend a design whose point set is partitioned into `groups' of
the same size; a block contains at most one point from each group; and two
points in different groups lie in a constant number of blocks.) 
Each block $\ga\in\LL_2$ has size at most $|\PP_0|$ and 
meets each group in at most one point; also two points in different groups
lie in a constant number of these blocks. 
A block of the transversal design $\DD_2$
can therefore be used to select blocks of $\DD_0$ from the parallel classes it meets. 
\end{enumerate}

Thus as input to our construction we have: 

\begin{enumerate}
\item[1.] a resolvable $2$-$(v_0,k_0,\lam_0)$ design $\calD_0=(\De_0,\calL_0)$ with $r$ parallel classes $\calP_0=\{P_1,\dots,P_r\}$ such that each $P_i$ consists of $s$  blocks, so
\begin{equation}\label{eq:d0}
v_0=sk_0,\quad (v_0-1)\lam_0=r(k_0-1);
\end{equation}
\item[2.] a symmetric $2$-$(v_1,r,\lam_1)$ design $\calD_1=(\De_1,\calL_1)$ with blocks of size $r$, so
\begin{equation}\label{eq:d1}
(v_1-1)\lam_1=r(r-1);
\end{equation}
together with bijections $(\psi_\be)_{\be\in\calL_1}$, where  $\psi_\be:\calP_0\rightarrow \be$ such that,
for each $P\in\calP_0$ and $i\in\De_1$, there is a unique $\be\in\calL_1$ with $\psi_\be(P)=i$.
\item[3.] a transversal design $\calD_2=(\De_2,\calL_2)$, where the point set $\De_2$ consists of $r$ groups of size $s$ and we identify $\De_2$ with the union $\cup_{P\in\calP_0}P$; each
block has size $k_2\leq r$ and meets each group in at most one point; and two points in different groups lie in exactly $\lam_2$ blocks. So there are $b_2$ blocks and each point lies in $r_2$ blocks, where
\begin{equation}\label{eq:d2}
r_2=\frac{(r-1)s\lam_2}{k_2-1},\quad b_2=\frac{r(r-1)s^2\lam_2}{k_2(k_2-1)};
\end{equation}
\end{enumerate}

We now give the construction, and then find necessary and sufficient
conditions on the parameters for it to be a symmetric $2$-design. Note that,
by the definition of $\DD_2$, for each $\ga\in\LL_2$,  $\be\in\LL_1$ and
$j\in\be$, the intersection $\ga\cap\psi_\be^{-1}(j)$ is either empty or a
single  block of the parallel class $\psi_\be^{-1}(j)$.

\begin{construction}\label{con:gencon} {\rm Let
$\calD_0, \calD_1, \calD_2$ and $(\psi_\be)_{\be\in\calL_1}$
be as above. For each  $\ga\in\calL_2$ and
$\be\in\calL_1$, let
\[
B_\be(\ga):=\bigcup_{j\in\be}((\ga\cap \psi_\be^{-1}(j))\times\{j\}).
\]
Define the design $\calD=(\De_0\times\De_1,\calB)$ to have point set
$\De_0\times\De_1$ and block set $\calB$, the disjoint union 
of the sets $\calB_\be=\{B_\be(\ga)\,|\,\ga\in \LL_2\}$, for $\be\in\calL_1$.
}
\end{construction}

\begin{theorem}\label{thm:gencon}
Let $\calD$ be the design of Construction~{\rm\ref{con:gencon}}.
\begin{enumerate}
\item[(a)] Then $\calD$ is a 
$1$-$(v_0v_1,k_0k_2,rr_2)$-design with $v_1b_2$ blocks, where the parameters 
are as in {\rm(\ref{eq:d0}), (\ref{eq:d1}), (\ref{eq:d2})}. 
\item[(b)] $\calD$ is symmetric if and only if
\[
r(r-1)s\lam_2=k_0k_2(k_2-1)
\]
\item[(c)] $\calD$ is a $2$-design if and only if
\[
\lam_1=\lam_0\frac{(r-1)s}{k_2-1}
\]
and in this case it is a $2$-$(v_0v_1,k_0k_2,\lam_1\lam_2)$ design.
\end{enumerate}
\end{theorem}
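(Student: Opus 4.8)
The plan is to compute three things directly from the construction: the number of blocks, the replication number (number of blocks through a point), and the number of blocks through a pair of points. Everything then follows by comparing these counts with the defining identities \eqref{eq:d0}, \eqref{eq:d1}, \eqref{eq:d2}. First I would establish (a). The point set is $\De_0\times\De_1$, so it has $v_0v_1$ points. The block set $\calB$ is the disjoint union of the $\calB_\be$ over $\be\in\calL_1$, and $|\calB_\be|=|\LL_2|=b_2$, so $|\calB|=v_1b_2$ (using that $\DD_1$ is symmetric, so it has $v_1$ blocks). For the block size: fix $\ga\in\LL_2$ and $\be\in\calL_1$; for each $j\in\be$ the set $\ga\cap\psi_\be^{-1}(j)$ is, by the remark preceding Construction~\ref{con:gencon}, either empty or a single block of $\DD_0$, hence contributes either $0$ or $k_0$ points to $B_\be(\ga)$. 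Since $\ga$ is a block of the transversal design meeting $k_2$ of the $r$ groups, and the groups are exactly the parallel classes $\psi_\be^{-1}(j)$ for $j\in\be$ (as $\psi_\be$ is a bijection $\calP_0\to\be$), exactly $k_2$ of these intersections are nonempty. Thus $|B_\be(\ga)|=k_0k_2$. For the replication number I would count flags $((\de,i),B_\be(\ga))$: a point $(\de,i)$ lies in $B_\be(\ga)$ iff $i\in\be$ and $\de$ lies in the block $\ga\cap\psi_\be^{-1}(j)$ for $j=i$, i.e.\ iff $i\in\be$, $\ga$ meets the parallel class $\psi_\be^{-1}(i)$, and $\de$ lies in that particular block of $\psi_\be^{-1}(i)$ which belongs to $\ga$. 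Summing over $\be\ni i$ (there are $r$ such $\be$, since $\DD_1$ is a symmetric $2$-$(v_1,r,\lam_1)$ design so each point lies in $r$ blocks) and then over the blocks of $\ga$ through $\de$ in the relevant parallel class, one gets $r\cdot r_2$: the point $\de$ of $\DD_0$ lies in one block of each parallel class, and that block lies in $r_2$ blocks of $\DD_2$. So $\calD$ is a $1$-$(v_0v_1,k_0k_2,rr_2)$ design, proving (a). (As a consistency check, $v_0v_1\cdot rr_2 = k_0k_2\cdot v_1b_2$ should hold, and it does after substituting \eqref{eq:d2}.)

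Part (b) is then immediate: $\calD$ is symmetric iff $|\calB|=|\De_0\times\De_1|$, i.e.\ $v_1b_2=v_0v_1$, i.e.\ $b_2=v_0$. Substituting $v_0=sk_0$ from \eqref{eq:d0} and $b_2=r(r-1)s^2\lam_2/(k_2(k_2-1))$ from \eqref{eq:d2} and cancelling a factor of $s$ gives exactly $r(r-1)s\lam_2=k_0k_2(k_2-1)$, as claimed. (One could equivalently use that a $1$-design with $rr_2=k_0k_2$, i.e.\ equal replication number and block size, is symmetric precisely when it has as many blocks as points; both routes give the same condition.)

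For part (c) I would count, for a pair of distinct points, the blocks containing both, splitting into two cases according to whether the $\De_1$-coordinates agree. \emph{Case 1: same second coordinate}, say $(\de,i)$ and $(\de',i)$ with $\de\ne\de'$. Such a pair lies in $B_\be(\ga)$ only if $i\in\be$, and then we need $\de,\de'$ both in the block $\ga\cap\psi_\be^{-1}(i)$ of $\DD_0$. There are $r$ blocks $\be$ with $i\in\be$; for each, $\de$ and $\de'$ lie in $\lam_0$ common blocks of $\DD_0$, each of which lies in some unique parallel class, but only those lying in the parallel class $\psi_\be^{-1}(i)$ are relevant; summing appropriately, the count works out to $r\lam_2\lam_0(?)\cdots$ — the clean way is to count flags: the pair $\de,\de'$ lies in a unique line $\ell$ of $\DD_0$ only if $\lam_0=1$, so in general one counts over the $\lam_0$ common blocks $\ell$, the parallel class $P$ containing $\ell$, the unique $\be$ with $\psi_\be(P)=i$, and the $r_2$ blocks $\ga\in\LL_2$ through $\ell$; this gives $\lam_0\cdot r_2 = \lam_0(r-1)s\lam_2/(k_2-1)$ blocks. \emph{Case 2: different second coordinates}, $(\de,i)$ and $(\de',i')$ with $i\ne i'$. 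Here we need $\be$ with $i,i'\in\be$ (there are $\lam_1$ such $\be$), and for each, we need $\ga\in\LL_2$ meeting \emph{both} parallel classes $\psi_\be^{-1}(i)$ and $\psi_\be^{-1}(i')$ in the specific blocks containing $\de$ and $\de'$ respectively; since $\DD_2$ is a transversal design with index $\lam_2$, two points in distinct groups lie in exactly $\lam_2$ common blocks, giving $\lam_1\lam_2$ blocks. Thus $\calD$ is a $2$-design iff the two counts agree, i.e.\ iff $\lam_0(r-1)s/(k_2-1)=\lam_1$, which is the stated condition; and when it holds the common value of $\lambda$ is $\lam_1\lam_2$, so $\calD$ is a $2$-$(v_0v_1,k_0k_2,\lam_1\lam_2)$ design.

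The main obstacle is bookkeeping in Case 1 of part (c): one must be careful that a common block $\ell$ of $\de,\de'$ in $\DD_0$ determines a unique parallel class $P\ni\ell$, hence a unique $\be$ with $\psi_\be(P)=i$, so there is no overcounting across different $\be$; and then that $\ell$, as a point of $\DD_2$, lies in $r_2$ blocks, each giving one block $B_\be(\ga)$ through the pair. Getting the factor-by-factor cancellation right — and confirming that Case 1 and Case 2 indeed produce the same number exactly under the displayed hypothesis, with no hidden parameter constraint — is where the real care is needed; the rest is substitution of \eqref{eq:d0}–\eqref{eq:d2}.
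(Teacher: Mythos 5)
Your proposal is correct and follows essentially the same route as the paper: count the replication number $rr_2$ for (a), compare $v_0v_1$ with $v_1b_2$ for (b), and for (c) split point pairs by whether their $\De_1$-coordinates agree, obtaining $\lam_0r_2=\lam_0\lam_2(r-1)s/(k_2-1)$ and $\lam_1\lam_2$ respectively and equating them. Your extra verification that each block has exactly $k_2$ nonempty intersections (hence size $k_0k_2$) is a slightly more careful spelling-out of a step the paper states without comment, but the argument is the same.
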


\begin{pf}
There are $|\De_0\times\De_1|=v_0v_1$ points and $|\calB|=v_1b_2$ blocks, with each block of size $k_0k_2$. Consider a point 
$(\de,i)\in\De_0\times\De_1$. The point $i$ of $\calD_1$ lies in $r$ blocks $\be\in\calL_1$. For each of these blocks $\be$, the
parallel class $P:=\psi_\be^{-1}(i)$ is uniquely determined, and
hence also the block $\ell\in P$ that contains $\de$ is determined.
Since $\ell$ belongs to $r_2$ blocks of $\calD_2$, it follows that
$(\de,i)$ lies in exactly $rr_2$ blocks of $\calD$. This proves part (a). Further, $\calD$ is symmetric if and only if $v_0v_1=
v_1b_2$, that is,
\[
sk_0v_1=v_1\frac{r(r-1)s^2\lam_2}{k_2(k_2-1)}
\]
and part (b) follows.

If $(\de,i), (\de',i')\in\De_0\times\De_1$ have distinct $\De_1$-entries,
then there are exactly $\lam_1$ blocks 
$\be\in\calL_1$ containing $\{i,i'\}$, and for each such $\be$, 
the points $\de,\de'$ lie in unique blocks $\ell,\ell'$ of $P, P'$ respectively, where $i=\psi_\be(P)$ and $i'=\psi_\be(P')$. 
Since $P\ne P'$, $\ell,\ell'$ belong to exactly
$\lam_2$ blocks of $\calL_2$, and for each of these blocks $\ga\in\calL_2$, the two points $(\de,i), (\de',i')$
lie in $B_\be(\ga)$. Thus these two points lie in exactly $\lam_1\lam_2$ blocks of $\calD$. 

On the other hand if $(\de,i), (\de',i)\in\De_0\times\De_1$ have the same 
$\De_1$-entry, then $\de,\de'$ lie in $\lam_0$ blocks of $\calD_0$.
Each of these blocks $\ell\in\calL_0$ lies in a unique
parallel class $P(\ell)\in\calP_0$, and there is a unique $\be(\ell)$ such that $\psi_{\be(\ell)}(P(\ell))=i$. There are
exactly $r_2$ blocks of $\calL_2$ containing $\ell$, and hence, for each of the $\lam_0$ blocks $\ell$, 
there are $r_2$ blocks of $\calD$ containing $(\de,i), (\de',i)$, namely the sets $B_{\be(\ell)}(\ga)$ such that $\ell\in\ga$. Thus these two points lie in exactly $\lam_0r_2$ blocks of $\calD$.
This implies that $\calD$ is a $2$-$(v_0v_1,k_0k_2,\lam)$
design if and only if 
\[
\lam=\lam_1\lam_2=\lam_0r_2=\lam_0\lam_2\frac{(r-1)s}{k_2-1}.
\]
Thus (c) is proved.
\end{pf}

\section{A special case}

All the constructions given in Sections~\ref{s:96}, \ref{s:symp}
and \ref{s:sporadic} have the feature that
the design $\mathcal{D}_1$ is a trivial $2$-$(v_1,v_1-1,v_1-2)$ design.
In this case, there is an unexpected connection with Latin squares.

Recall the bijections $\psi_\beta$ in our construction: $\psi_\beta$ maps
the set $\mathcal{P}_0$, of cardinality $r$, to the block $\beta$ of the
design $\mathcal{D}_1$; and we require that for each $P\in\calP_0$ and
$i\in\De_1$, there is a unique $\be\in\calL_1$ with $\psi_\be(P)=i$.

Suppose that $k_1=r=v_1-1$. Add a `dummy
point' $\infty$ to $\mathcal{P}_0$. Each block $\beta$ is the complement of
a single point $b$; extend the bijection $\psi_\beta$ to map $\infty$ to
$b$. Now the domain and range of each $\psi_\beta$ has cardinality $v_1$;
each map is a bijection; and our extra condition shows that there is a unique
bijection mapping any element of $\mathcal{P}_0\cup\{\infty\}$ to any element
of $\Delta_1$. So, numbering the elements of $\mathcal{P}_0\cup\{\infty\}$
and of $\Delta_1$ from $1$ to $v_1$, the $\psi_\beta$ are the rows of a Latin
square.

Conversely, any Latin square gives a collection of bijections
satisfying our condition. Hence, in the case where $r=v_1-1$, given the
designs $\mathcal{D}_i$ for $i=0,1,2$, any Latin square gives a design,
which will be symmetric (resp., a $2$-design) if the conditions of
Theorem~\ref{thm:gencon} are satisfied.

\medskip

All our constructions have a further important feature, which simplifies
the general method even further.

A resolvable design is \emph{affine} if any two blocks in different 
resolution classes intersect in a constant number $\mu$ of points. In
an affine resolvable $2$-design with $r$ resolution classes each containing
$s$ blocks, we have $v=s^2\mu$, $k=s\mu$, and $r=(s^2\mu-1)/(s-1)$.

\begin{theorem}
If there exists an affine resolvable $2$-design with $r$ resolution classes
each containing $s$ blocks, in which blocks in different classes intersect
in $\mu$ points, then there exists a symmetric $2$-$(s^2\mu(r+1), 
s\mu r, \mu(r-1))$ design.
\end{theorem}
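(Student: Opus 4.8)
The plan is to invoke Construction~\ref{con:gencon} and Theorem~\ref{thm:gencon} with a carefully chosen triple of input designs, the key observation being that the required transversal design can be manufactured from the given affine resolvable design by ``dualising'' it. Write $\calD_0=(\De_0,\calL_0)$ for the given affine resolvable $2$-design, with its $r$ resolution classes forming $\calP_0$. From the parameters recalled just above the statement we have $v_0=s^2\mu$, $k_0=s\mu$, and $r=(s^2\mu-1)/(s-1)$, whence $\lam_0=r(k_0-1)/(v_0-1)=(s\mu-1)/(s-1)$, so that the identity $s\lam_0=r-1$ holds.

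For $\calD_1$ I would take the trivial $2$-$(r+1,r,r-1)$ design on a set $\De_1$ of size $r+1$ whose blocks are the complements of the singletons, so $v_1=r+1$, $k_1=r$, $\lam_1=r-1$. Since $k_1=r=|\calP_0|$ and $r=v_1-1$, the Latin-square correspondence of the preceding section applies: adjoin a dummy point to $\calP_0$ and use any Latin square of order $v_1$ (for instance the Cayley table of $\mathbb{Z}_{v_1}$) to obtain bijections $\psi_\be\colon\calP_0\to\be$ with the property that each $i\in\De_1$ is the image of each $P\in\calP_0$ under exactly one $\psi_\be$. For $\calD_2$ I would use $\calD_0$ itself, transposed: let the point set be $\De_2=\calL_0$, partitioned into the $r$ groups coming from the parallel classes in $\calP_0$ (each of size $s$), as required in input~3. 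For $x\in\De_0$ put $\ga_x:=\{\,\ell\in\calL_0:x\in\ell\,\}$. Because $\calD_0$ is resolvable, $\ga_x$ meets each parallel class in exactly one block, so each $\ga_x$ is a full transversal and $k_2=r$; and if $\ell,\ell'$ lie in distinct parallel classes, the number of $\ga_x$ containing both equals $|\ell\cap\ell'|=\mu$ by the affine hypothesis, so $\calD_2:=(\calL_0,\{\ga_x:x\in\De_0\})$ is a transversal design with $\lam_2=\mu$. (For non-degenerate $\calD_0$ the $\ga_x$ are pairwise distinct: two points lying in exactly the same blocks would force $\lam_0=r$ and hence $v_0=k_0$.) Its derived parameters are $r_2=(r-1)s\mu/(r-1)=s\mu$ and $b_2=r(r-1)s^2\mu/(r(r-1))=s^2\mu=v_0$.

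It then remains to feed $\calD_0,\calD_1,\calD_2$ and $(\psi_\be)$ into Construction~\ref{con:gencon} and check the hypotheses of Theorem~\ref{thm:gencon}. Part~(a) yields a $1$-design on $v_0v_1=s^2\mu(r+1)$ points with blocks of size $k_0k_2=s\mu r$. For part~(b), $r(r-1)s\lam_2=r(r-1)s\mu=s\mu\cdot r(r-1)=k_0k_2(k_2-1)$, so the symmetry condition is satisfied. For part~(c), the condition $\lam_1=\lam_0(r-1)s/(k_2-1)$ becomes $r-1=\lam_0(r-1)s/(r-1)=s\lam_0$, which is precisely the identity noted in the first paragraph. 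Hence $\calD$ is a symmetric $2$-$(v_0v_1,\,k_0k_2,\,\lam_1\lam_2)=2$-$(s^2\mu(r+1),\,s\mu r,\,\mu(r-1))$ design, as claimed.

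Beyond the parameter bookkeeping, the only genuine content is the recognition that the affine intersection number $\mu$ is exactly the index of the transversal design obtained from $\calD_0$; accordingly I expect the step most deserving of detail to be the verification that $\calD_2$ really satisfies the transversal-design axioms of input~3 --- in particular that $k_2=r$, that two points in different groups lie in a \emph{constant} number $\mu$ of blocks, and that no block repetitions arise when $\calD_0$ is non-trivial.
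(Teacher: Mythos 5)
Your proposal is correct and follows essentially the same route as the paper: take $\mathcal{D}_0$ to be the given affine design, $\mathcal{D}_2$ its dual (whose blocks automatically meet every group, giving $k_2=r$ and $\lambda_2=\mu$), $\mathcal{D}_1$ the trivial $2$-$(r+1,r,r-1)$ design with $\psi_\beta$ from a Latin square, and then verify conditions (b) and (c) of Theorem~\ref{thm:gencon}. Your extra checks (distinctness of the blocks $\gamma_x$, explicit derivation of $s\lambda_0=r-1$) are harmless elaborations of the same argument.
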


\begin{proof}
We take $\mathcal{D}_0$ to be the given affine design, and $\mathcal{D}_2$
to be its dual. The groups in $\mathcal{D}_2$ correspond to the resolution
classes in $\mathcal{D}_0$, and every block of $\mathcal{D}_2$ meets every
group (because every point of $\mathcal{D}_0$ lies on a block in every 
resolution class). We take $\mathcal{D}_1$ to be the trivial symmetric
$2$-$(r+1,r,r-1)$ design, and the bijections $\psi_\beta$ to come from a
Latin square of order $r+1$.

We have $k_2=r$, and so
\[r(r-1)s\lambda_2=r(r-1)s\mu=k_2(k_2-1)k_0\]
and
\[\lambda_0(r-1)s/(k_2-1)=(s\mu-1)s/(s-1)=(s^2\mu-1)/(s-1)-1=r-1=\lambda_1,\]
so $\mathcal{D}$ is a symmetric $2$-design, by parts (b) and (c) of
Theorem~\ref{thm:gencon}.
\end{proof}

\paragraph{Remark} Since the number of different Latin squares is very large,
this construction should produce many non-isomorphic symmetric designs. Most
of these will not be flag-transitive.  In the next section we study automorphisms of the designs from Construction~\ref{con:gencon}, and in Theorem~\ref{flagtr} 
we give necessary and sufficient conditions for the automorphism group preserving 
the partition to act flag-transitively.

\section{Automorphisms of the designs}

In this section we determine the subgroup $G$ of automorphisms of the design
$\DD$ in  Construction~\ref{con:gencon} that leaves invariant the partition
\[
\XX:=\{\De_0\times\{i\}\,|\ i\in\De_1\}
\]
of the point set $\De_0\times\De_1$ of $\DD$. Thus
\[
G:=\Aut(\DD)\cap (\Sym(\De_0)\wr\Sym(\De_1))
\]
and a typical element $g$ of $G$ has the form 
\[ 
g=(h_1,\dots,h_{v_1})\si
\]
where the $h_i\in\Sym(\De_0)$ and $\si\in\Sym(\De_1)$. We note that this may
not be the full automorphism group of the design: this is the case for the
point-imprimitive groups of the symplectic designs described in
Section~\ref{s:symp}.

First we  
define two properties of elements $g=(h_1,\dots,h_{v_1})\si\in \Sym(\De_0)
\wr\Sym(\De_1)$, and prove that the properties hold for elements of $G$. 
The first property is:
\begin{equation}\label{prop1}
\forall\ \be\in\LL_1,\ \forall\ j\in\be,\quad (\psi_\be^{-1}(j))^{h_j}
=\psi_{\be^\si}^{-1}(j^\si)
\end{equation}
or in other words $\psi_\be(P)=j \iff\ 
\psi_{\be^\si}(P^{h_j})=j^\si$ for any $\be\in\LL_1, j\in\be$ and $P\in\PP_0$.
The second property involves blocks $\be\in\LL_1, 
\ga\in\LL_2$, and the following set:
\begin{equation}\label{gadash}
\ga' = \ga'_{\ga,\be,g}:=\{ (\ga\cap\psi_\be^{-1}(j))^{h_j}\ |\,j\in\be\ 
\mbox{such that}\ \ga\cap\psi_\be^{-1}(j)\ne\emptyset\}.
\end{equation}
The property is: $\forall\ \be\in\LL_1,\ \forall\ \ga\in\LL_2,$
\begin{equation}\label{prop2}
\ga'=\ga'_{\ga,\be,g}\in\LL_2,\ 
\mbox{and moreover}\ \forall\ j\in\be,\ \ga'\cap\psi_{\be^\si}^{-1}(j^\si)=
(\ga\cap \psi_\be^{-1}(j))^{h_j}.
\end{equation}

\begin{theorem}\label{hasprops}
Let $\calD=(\De_0\times\De_1,\calB)$ be the design of Construction~{\rm\ref{con:gencon}}, and let 
$g=(h_1,\dots,h_{v_1})\si\in \Aut(\DD)\cap\left(\Sym(\De_0)\wr\Sym(\De_1)\right)$.
Then the following hold.
\begin{enumerate}
\item $\si\in\Aut(\DD_1)$, and, for each $j\in\De_1$, $h_j$ lies in the subgroup $\Aut^*(\DD_0)$ 
of $\Aut(\DD_0)$ preserving the set $\PP_0$ of parallel classes.
\item  Properties {\rm(\ref{prop1})} and {\rm(\ref{prop2})} hold for $g$.
\item  For each $\be\in\LL_1, \ga\in\LL_2$, $B_\be(\ga)^g=B_{\be^\si}(\ga')$
with $\ga'=\ga'_{\ga,\be,g}$ as in {\rm(\ref{gadash})}, and in particular,
$G$ leaves invariant the partition $\{\BB_\be\,|\,\be\in\LL_1\}$ of $\BB$.
\item If $\si=1$ then, for each $j\in\De_1$, $h_j$ lies in the subgroup $T_0$ 
of $\Aut(\DD_0)$ fixing setwise each of the parallel classes in $\PP_0$.
\end{enumerate}
\end{theorem}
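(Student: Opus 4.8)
The plan is to prove the four assertions essentially in the order stated, since each builds on the previous one, working throughout with the fact that $g$ is an automorphism of $\calD$ that respects the point-partition $\XX$.

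\medskip

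\textbf{Step 1 (the quotient actions).} First I would observe that since $g=(h_1,\dots,h_{v_1})\si$ normalises the partition $\XX=\{\De_0\times\{i\}\}$, it induces the permutation $\si$ on the index set $\De_1$. By Theorem~\ref{thm:gencon}(iii), $g$ also permutes the block-classes $\BB_\be$ (a block of $\BB_\be$ meets the parts $\De_0\times\{j\}$ for exactly the $j\in\be$, by construction of $B_\be(\ga)$, so the ``shadow'' of a block on $\De_1$ is the block $\be$ of $\DD_1$; $g$ must send blocks of $\DD$ to blocks, hence it sends $\be$-shadowed blocks to $\be'$-shadowed blocks for some $\be'$, and one checks $\be'$ depends only on $\be$ and equals $\be^\si$). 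Thus $\si$ permutes the blocks $\LL_1$ of $\DD_1$ and fixes its point set $\De_1$, so $\si\in\Aut(\DD_1)$. For the $h_j$: fix $j\in\De_1$ and consider the parallel class $P$ with $\psi_\be(P)=j$; the ``fibre'' $\De_0\times\{j\}$ meets each block $B_\be(\ga)$ (for $\be\ni j$) in a set of the form $\ell\times\{j\}$ with $\ell\in P$ (or $\emptyset$). Since $h_j$ is the action of $g$ on this fibre and $g$ preserves blocks, $h_j$ permutes the blocks of $\DD_0$ that arise this way; ranging over $\be,\ga$ one recovers all of $\LL_0$, so $h_j\in\Aut(\DD_0)$, and moreover $h_j$ must carry parallel classes to parallel classes, i.e.\ $h_j\in\Aut^*(\DD_0)$. (Point-set of $\DD_0$ is fixed by $h_j$ tautologically since $h_j\in\Sym(\De_0)$.)

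\medskip

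\textbf{Step 2 (the two combinatorial properties).} Property~(\ref{prop1}) is the precise bookkeeping for how $h_j$ and $\si$ interact with the bijections $\psi_\be$. I would derive it by tracking a single point: $(\de,j)$ with $\de\in\psi_\be^{-1}(j)=:\ell\in P$. Applying $g$ sends $(\de,j)\mapsto(\de^{h_j},j^\si)$, and this must lie in $B_{\be^\si}(\ga')$ whenever $(\de,j)\in B_\be(\ga)$; unpacking the definition of $B_{\be^\si}$ forces $\de^{h_j}\in\psi_{\be^\si}^{-1}(j^\si)$, and since this holds for every $\de$ in the block $\ell$ of $\DD_0$, and $h_j$ is a bijection, we get $(\psi_\be^{-1}(j))^{h_j}=\psi_{\be^\si}^{-1}(j^\si)$ as required. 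For property~(\ref{prop2}), given $\be,\ga$ I would apply $g$ to the block $B_\be(\ga)=\bigcup_{j\in\be}((\ga\cap\psi_\be^{-1}(j))\times\{j\})$ and compute directly: $B_\be(\ga)^g=\bigcup_{j\in\be}((\ga\cap\psi_\be^{-1}(j))^{h_j}\times\{j^\si\})$, which using (\ref{prop1}) and reindexing $j\mapsto j^\si$ is $\bigcup_{j'\in\be^\si}((\cdot)\times\{j'\})$ where the $j'$-slice is $(\ga\cap\psi_{(\be^\si)^{\si^{-1}}}^{-1}(j'^{\si^{-1}}))^{h_{j'^{\si^{-1}}}}$. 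This is a block of $\calD$ lying in $\BB_{\be^\si}$, hence equals $B_{\be^\si}(\ga'')$ for a unique $\ga''\in\LL_2$; matching slices with the definition~(\ref{gadash}) of $\ga'$ identifies $\ga''=\ga'$ and gives the ``moreover'' clause.

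\medskip

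\textbf{Step 3 (block action) and Step 4 ($\si=1$ case).} Item~3 is essentially the content of the last computation in Step~2, so I would simply record it: $B_\be(\ga)^g=B_{\be^\si}(\ga'_{\ga,\be,g})$, and since $\si$ permutes $\LL_1$, this says $\BB_\be\mapsto\BB_{\be^\si}$, i.e.\ $G$ preserves $\{\BB_\be\mid\be\in\LL_1\}$. Finally, for item~4, if $\si=1$ then (\ref{prop1}) reads $(\psi_\be^{-1}(j))^{h_j}=\psi_\be^{-1}(j)$ for every $\be\ni j$; as $\be$ ranges over the $r$ blocks of $\DD_1$ through $j$, the parallel classes $\psi_\be^{-1}(j)$ range over \emph{all} of $\PP_0$ (by the defining property of the family $(\psi_\be)$: for each $P\in\PP_0$ and $i\in\De_1$ there is a unique $\be$ with $\psi_\be(P)=i$). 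Hence $h_j$ fixes every parallel class setwise, i.e.\ $h_j\in T_0$.

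\medskip

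\textbf{Main obstacle.} The genuinely delicate point is the bookkeeping in Step~2: being careful that the index $\be'$ (the class to which $\BB_\be$ is mapped) really is $\be^\si$ and not merely \emph{some} block, and that the set $\ga'$ built slice-by-slice out of the $h_j$-images is genuinely a single block of the transversal design $\DD_2$ rather than just a transversal-looking set of blocks of $\DD_0$. The latter is forced because $B_\be(\ga)^g$ is literally a block of $\calD$ (as $g\in\Aut(\calD)$) lying in the class $\BB_{\be^\si}$, and every block in that class has the form $B_{\be^\si}(\ga'')$ with $\ga''\in\LL_2$ by construction; so membership in $\LL_2$ is automatic, and the only real work is the index-chasing to confirm $\ga''$ matches the formula~(\ref{gadash}). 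Everything else is routine unwinding of the definitions of $B_\be(\ga)$ and of the wreath-product action.
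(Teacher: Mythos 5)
Your overall route is the same as the paper's: unwind the equation $B_\be(\ga)^g=B_{\be'}(\ga')$, identify $\be'=\be^\si$ and the slice-by-slice description of $\ga'$, then read off (\ref{prop1}), (\ref{prop2}) and part~3, and deduce parts~1 and~4 at the end. However, two of your supporting steps do not hold as stated. First, Step~1 rests on the claim that a block of $\BB_\be$ meets the parts $\De_0\times\{j\}$ for \emph{exactly} the $j\in\be$, so that its ``shadow'' on $\De_1$ is $\be$. This is false in the generality of Construction~\ref{con:gencon}: only $k_2\le r$ is assumed, so $\ga\cap\psi_\be^{-1}(j)$ may be empty for some $j\in\be$ (the paper warns of exactly this just before the construction and again in the proof), and then the shadow of $B_\be(\ga)$ is the $k_2$-subset $\psi_\be\bigl(\PP_0^{(\ga)}\bigr)$ of $\be$, which need not determine $\be$. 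Your shadow argument is valid only under the extra hypothesis that every block of $\DD_2$ meets every group ($k_2=r$), which holds in the paper's examples but is not assumed in Theorem~\ref{hasprops}; the paper instead works from the biconditional ``$(\de,j)\in B_\be(\ga)$ if and only if $(\de^{h_j},j^\si)\in B_{\be'}(\ga')$''. (Also, your appeal to ``Theorem~\ref{thm:gencon}(iii)'' is a misreference: that theorem concerns parameters, not block classes.)

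Second, your derivation of (\ref{prop1}) is under-quantified. Tracking the points $\de$ of a single block $\ell=\ga\cap P$ shows only that $\ell^{h_j}$ equals the single block $\ga'\cap\psi_{\be^\si}^{-1}(j^\si)$ of the class $P':=\psi_{\be^\si}^{-1}(j^\si)$; it does not yield the claimed equality of parallel classes $(\psi_\be^{-1}(j))^{h_j}=\psi_{\be^\si}^{-1}(j^\si)$. For that you must let $\ell$ range over all of $P$, using that each $\ell\in P$ occurs as $\ga\cap P$ for some $\ga\in\LL_2$ (every point of $\DD_2$ lies on $r_2>0$ blocks), and then use that $P$ and $P'$ are both partitions of $\De_0$ and $h_j$ is a bijection to upgrade ``every block of $P^{h_j}$ lies in $P'$'' to $P^{h_j}=P'$ --- exactly the paper's argument. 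Relatedly, your Step~1 assertion that $h_j$ ``must carry parallel classes to parallel classes'' is unjustified where it appears: membership in $\Aut^*(\DD_0)$ rather than merely $\Aut(\DD_0)$ is precisely what (\ref{prop1}) provides, so it should come afterwards, as in the paper. Steps~3 and~4 are fine and agree with the paper once these repairs are made.
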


\begin{proof}
Suppose that $g=(h_1,\dots,h_{v_1})\si\in \Aut(\DD)$. Let
$\be\in\LL_1, \ga\in\LL_2$. Then the image under $g$ of the block $B_\be(\ga)$ 
of $\DD$ is also a block of $\DD$, say $B_{\be'}(\ga')$ where $\be'\in\LL_1, 
\ga'\in\LL_2$. By definition, 
$B_\be(\ga)=\bigcup_{j\in\be}((\ga\cap\psi_\be^{-1}(j))\times\{j\})$. Note that
for some $j\in\be$ the intersection $\ga\cap\psi_\be^{-1}(j)$ may be empty.

Now $(\de,j)\in B_\be(\ga)$ if and only if $j\in\be$ and, for 
$P:=\psi_\be^{-1}(j)$, the point $\de$ lies in the unique part $\ga\cap P$ 
of $P$. This holds if and only if $(\de,j)^g=
(\de^{h_j},j^\si)\in B_{\be'}(\ga')$, and hence if and only if $j^\si\in\be'$ 
and for $P':=\psi_{\be'}^{-1}(j^\si)$, $\de^{h_j}\in \ga'\cap P'$. 
We make several deductions from this. 

First we deduce that, for each $j\in\be$,
$(\ga\cap\psi_\be^{-1}(j))^{h_j}=\ga'\cap\psi_{\be'}^{-1}(j^\si)$.
Next, if we fix $\be$ and let 
$j$ vary over the points of $\be$, we see that
$\be'=\{j^\si\,|\,j\in\be\}=\be^\si$. In view of the
previous sentence, this means that   
$\ga'$ is the set $\ga'_{\ga,\be,g}$ defined in 
(\ref{gadash}), and hence $\ga'_{\ga,\be,g}\in\LL_2$. 
This proves property (\ref{prop2}) for $g$, and also that
$B_\be(\ga)^g=B_{\be^\si}(\ga'_{\ga,\be,g})$. 
If we now let $\be$ vary 
over $\LL_1$, we deduce that $\si$ leaves invariant the set $\LL_1$, and hence 
$\si\in\Aut(\DD_1)$, and 
$G$ leaves invariant the partition $\{\BB_\be\,|\,\be\in\LL_1\}$ of $\BB$.

Next fix $\be\in\LL_1$ and $j\in\be$, and set $P:=\psi_\be^{-1}(j)$ and
$P':=\psi_{\be^\si}^{-1}(j^\si)$, so $P, P'\in\PP_0$. 
By the definition of $\DD_2$, for each block $\ell\in P$ there exists 
$\ga\in\LL_2$ such that $\ga\cap P=\{\ell\}$.
We have proved above that $P'$ contains  
\[
\ga'_{\ga,\be,g}\cap\psi_{\be^\si}^{-1}(j^\si)=(\ga\cap P)^{h_j}=\{\ell^{h_j}\}
 \]
and therefore $P'$ contains $\ell^{h_j}$ for each $\ell\in P$.
Since $P$ and $P'$ are partitions of $\De_0$ and $h_j$ is a bijection, 
this implies that 
$P'=P^{h_j}$, and so property (\ref{prop1}) is proved for $g$. 

Finally, fixing $j\in\De_1$, and choosing $\ell\in\LL_0$, it follows from
the properties of $\PP_0$ and the $(\psi_\be)_{\be\in\LL_1}$
that there is a unique $P\in\PP_0$
containing $\ell$, and a unique $\be\in\LL_1$ such that $\psi_\be(P)=j$. We have
just shown that $\ell^{h_j}$ lies in $P^{h_j}\in\PP_0$. Hence $h_j$ leaves invariant both the
block set $\LL_0$ of $\DD_0$, and the set $\PP_0$ of parallel classes. So $h_j\in\Aut^*(\DD_0)$. 
Moreover, if $\si=1$, then  $P^{h_j}=\psi_{\be^\si}^{-1}(j^\si)=
\psi_\be^{-1}(j)=P$, and this holds for all $j$ and for all $P$. Thus in this case each $h_j\in T_0$.
\end{proof}

It is not difficult to prove a converse to this lemma, showing that property
(\ref{prop2}) characterises membership of $G$. 

\begin{lemma}\label{lem:charg}
The group $G=\Aut(\DD)\cap (\Sym(\De_0)\wr\Sym(\De_1))$ is equal to the set 
of all elements $g=(h_1,\dots,h_{v_1})\si$ such that 
\[
\mbox{each}\ h_j\in\Aut^*(\DD_0), \si\in\Aut(\DD_1),\
\mbox{and {\rm(\ref{prop2})} holds for $g$}.
\]
\end{lemma}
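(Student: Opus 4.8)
The plan is to prove the two inclusions separately. The forward inclusion $G\subseteq\{g : \text{each } h_j\in\Aut^*(\DD_0),\ \si\in\Aut(\DD_1),\ \text{(\ref{prop2}) holds}\}$ is essentially already done: it is exactly parts (1) and (2) of Theorem~\ref{hasprops}. So the only real work is the reverse inclusion. Suppose $g=(h_1,\dots,h_{v_1})\si$ with each $h_j\in\Aut^*(\DD_0)$, $\si\in\Aut(\DD_1)$, and (\ref{prop2}) holding. I must show $g\in\Aut(\DD)$; since $g$ already lies in $\Sym(\De_0)\wr\Sym(\De_1)$ and $|\BB|=v_1b_2$ is finite, it suffices to check that $g$ maps every block of $\DD$ to a block of $\DD$.

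First I would record a consequence of the hypotheses. Since $\si\in\Aut(\DD_1)$, for each $\be\in\LL_1$ we have $\be^\si\in\LL_1$, and the map $j\mapsto j^\si$ is a bijection from $\be$ onto $\be^\si$. Property~(\ref{prop2}) directly asserts that $\ga'=\ga'_{\ga,\be,g}\in\LL_2$ and that, for every $j\in\be$,
\[
\ga'\cap\psi_{\be^\si}^{-1}(j^\si)=(\ga\cap\psi_\be^{-1}(j))^{h_j},
\]
where each side is either empty or a singleton block. This is all I will need; in particular I will \emph{not} need property~(\ref{prop1}) explicitly for this direction (although it would follow, by the same argument as in the proof of Theorem~\ref{hasprops}, from (\ref{prop2}) together with the defining property of $\DD_2$ that every block $\ell\in P$ equals $\ga\cap P$ for some $\ga\in\LL_2$).

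Now fix an arbitrary block $B_\be(\ga)$ of $\DD$, so $\be\in\LL_1$, $\ga\in\LL_2$, and recall
\[
B_\be(\ga)=\bigcup_{j\in\be}\bigl((\ga\cap\psi_\be^{-1}(j))\times\{j\}\bigr).
\]
Applying $g$, a point $(\de,j)$ with $j\in\be$ and $\de\in\ga\cap\psi_\be^{-1}(j)$ is sent to $(\de^{h_j},j^\si)$. As $j$ runs over $\be$, the indices $j^\si$ run over $\be^\si$, and by (\ref{prop2}) the set $\{\de^{h_j} : \de\in\ga\cap\psi_\be^{-1}(j)\}$ is precisely $\ga'\cap\psi_{\be^\si}^{-1}(j^\si)$. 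Re-indexing via $j'=j^\si$, this says
\[
B_\be(\ga)^g=\bigcup_{j'\in\be^\si}\bigl((\ga'\cap\psi_{\be^\si}^{-1}(j'))\times\{j'\}\bigr)=B_{\be^\si}(\ga'),
\]
which is a block of $\DD$ because $\be^\si\in\LL_1$ and $\ga'\in\LL_2$. Hence $g$ permutes $\BB$, so $g\in\Aut(\DD)$, and since $g\in\Sym(\De_0)\wr\Sym(\De_1)$ by construction, $g\in G$. Combined with the forward inclusion from Theorem~\ref{hasprops}, this gives equality.

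I expect the main (minor) obstacle is bookkeeping: making sure the union over $j\in\be$ genuinely re-indexes to a union over $j'\in\be^\si$ of the correct singletons, and that empty intersections are handled consistently on both sides (they are, since (\ref{prop2}) matches $\ga'\cap\psi_{\be^\si}^{-1}(j^\si)$ with $(\ga\cap\psi_\be^{-1}(j))^{h_j}$ term by term, empty mapping to empty). There is no deeper difficulty, since the hard direction --- extracting properties (\ref{prop1}) and (\ref{prop2}) from membership in $G$ --- was already carried out in Theorem~\ref{hasprops}; this lemma merely observes that (\ref{prop2}), together with the a priori constraints $h_j\in\Aut^*(\DD_0)$ and $\si\in\Aut(\DD_1)$, is enough to reverse the implication.
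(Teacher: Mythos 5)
Your proposal is correct and follows essentially the same route as the paper: the forward inclusion is quoted from Theorem~\ref{hasprops}, and for the reverse inclusion you apply $g$ piecewise to $B_\be(\ga)=\bigcup_{j\in\be}((\ga\cap\psi_\be^{-1}(j))\times\{j\})$ and use (\ref{prop2}) to identify the image with $B_{\be^\si}(\ga'_{\ga,\be,g})$, exactly as in the paper's proof. The only difference is cosmetic (your explicit re-indexing over $\be^\si$ and remark about empty intersections), so no further changes are needed.
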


\begin{proof}
Suppose that $g=(h_1,\dots,h_{v_1})\si\in \Aut^*(\DD_0)\wr\Aut(\DD_1)$ and that
property  {\rm(\ref{prop2})} holds for $g$.
Let $\be\in\LL_1,\ \ga\in\LL_2$ and consider the image of $B_\be(\ga)$ under $g$.
Now $B_\be(\ga)=\cup_{j\in\be}((\ga\cap\psi_\be^{-1}(j))\times\{j\})$, and by {\rm(\ref{prop2})},
for each $j\in\be$, 
\[
((\ga\cap\psi_\be^{-1}(j))\times\{j\})^g=(\ga\cap\psi_\be^{-1}(j))^{h_j}\times\{j^\si\}
=(\ga'_{\ga,\be,g}\cap\psi_{\be^\si}^{-1}(j^\si))\times\{j^\si\}
\]
and also $\ga'_{\ga,\be,g}\in\LL_2$. It follows that $B_\be(\ga)^g=
B_{\be^\si}(\ga'_{\ga,\be,g})
\in\LL$, and therefore $g\in\Aut(\DD)$, whence $g\in G$.
The converse follows from Lemma~\ref{hasprops}.
\end{proof}

\bigskip
Since automorphisms of designs are defined as permutations of the point set 
with the property that the block set is left invariant, the arguments above 
exhibit clearly the relationship between $\DD$ and the input designs $\DD_0$
and $\DD_1$ whose point sets are Cartesian factors of the point set of $\DD$.
However the role of the transversal design $\DD_2$ needs further clarification.
By Lemma~\ref{hasprops}.3, $G$ leaves invariant the
partition $\{\BB_\be\,|\,\be\in\LL_1\}$, where $\BB_\be=\{B_\be(\ga)
\,|\,\ga\in\LL_2\}$, and permutes the parts $\BB_\be$ of this 
partition in the same way that $G$ permutes $\LL_1$.  
We show further that, for $\be\in\LL_1$,
the setwise stabiliser $G_\be$ induces a group 
of automorphisms of $\DD_2$. Recall that the point set $\De_2$ of $\DD_2$ 
is equal to $\cup_{P\in\PP_0}P$. For $\ga\in\LL_2$, let $\PP_0^{(\ga)}
:=\{P\in\PP_0\,|\,\ga\cap 
P\ne\emptyset\}$.

\begin{lemma}\label{d2}
For $\be\in\LL_1$, define the map $\vp_\be:G_\be\rightarrow \Sym(\De_2)$ as 
follows. For $g=(h_1,\dots,h_{v_1})\si\in G_\be$ and $\ell\in\De_2$,
\[
(g)\vp_\be: \ell\mapsto \ell^{h_j},\ \mbox{where $j$ is the unique element of $\be$ 
such that}\ \ell\in\psi_\be^{-1}(j).
\]
Then 
\begin{enumerate}
\item $\vp_\be$ is a homomorphism and $(G_\be)\phi_\be
\leq\Aut(\DD_2)$;
\item  $(g)\vp_\be:\ga\rightarrow\ga'_{\ga,\be,g}$, and
$((\ga\cap\psi_\be^{-1}(j))\times\{j\})^g=(\ga'_{\ga,\be,g}\cap
\psi_\be^{-1}(j^\si))\times\{j^\si\}$ whenever $\psi_\be^{-1}(j)\in
\PP_0^{(\ga)}$;
\item $(g)\vp_\be$ fixes $\ga$
if and only if $g\in G_{B_\be(\ga)}$.
\end{enumerate}
\end{lemma}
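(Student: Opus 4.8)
The plan is to verify the three assertions in order, using Theorem~\ref{hasprops} and Lemma~\ref{lem:charg} as the main tools. First I would check that $\vp_\be$ is well defined: for $\ell\in\De_2=\cup_{P\in\PP_0}P$, the bijectivity assumptions on the $(\psi_\be)_{\be\in\LL_1}$ guarantee that there is indeed a unique $j\in\be$ with $\ell\in\psi_\be^{-1}(j)$, so $\ell^{h_j}$ is unambiguously specified; and since $g\in G_\be$ fixes $\be$, property~(\ref{prop1}) applied with $\si$ restricted appropriately shows $\ell^{h_j}\in\psi_\be^{-1}(j^\si)$, which is again a parallel class, so $(g)\vp_\be$ maps $\De_2$ into $\De_2$. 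That it is a bijection follows because each $h_j$ is a bijection of $\De_0$ and property~(\ref{prop1}) shows the pieces $\psi_\be^{-1}(j)$ are permuted among themselves by $g$; an explicit inverse is given by $(g^{-1})\vp_{\be}$ after checking $g^{-1}\in G_\be$. For the homomorphism property I would take $g_1,g_2\in G_\be$, chase an element $\ell$ through $(g_1)\vp_\be$ then $(g_2)\vp_\be$, and compare with $(g_1g_2)\vp_\be$; the key point is that if $\ell\in\psi_\be^{-1}(j)$ then $\ell^{h_j}\in\psi_\be^{-1}(j^{\si_1})$ by~(\ref{prop1}), so in applying $(g_2)\vp_\be$ one uses the component $h_{j^{\si_1}}$ of $g_2$, and the composite uses $h_j^{(1)}h_{j^{\si_1}}^{(2)}$, which is exactly the $j$-component of $g_1g_2$ in the wreath product. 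Hence $\vp_\be$ is a homomorphism.

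Next I would establish part~2, which in fact does most of the work for the remaining claims. Fix $g=(h_1,\dots,h_{v_1})\si\in G_\be$ and $\ga\in\LL_2$. For $j\in\be$ with $\psi_\be^{-1}(j)\in\PP_0^{(\ga)}$, the intersection $\ga\cap\psi_\be^{-1}(j)$ is a single block $\ell$ of that parallel class, and by definition of $\vp_\be$ we have $\ell^{(g)\vp_\be}=\ell^{h_j}$. Comparing with the definition~(\ref{gadash}) of $\ga'_{\ga,\be,g}$, which collects exactly these images $(\ga\cap\psi_\be^{-1}(j))^{h_j}=\{\ell^{h_j}\}$ over the relevant $j$, we see that $\ga^{(g)\vp_\be}$ (the image of the block $\ga$ under the permutation $(g)\vp_\be$ of $\De_2$) coincides as a set with $\ga'_{\ga,\be,g}$. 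By property~(\ref{prop2}) the latter lies in $\LL_2$, so $(g)\vp_\be$ sends blocks of $\DD_2$ to blocks of $\DD_2$; since $(g)\vp_\be$ is a bijection of $\De_2$ this gives $(g)\vp_\be\in\Aut(\DD_2)$, completing part~1. The displayed identity in part~2, namely $((\ga\cap\psi_\be^{-1}(j))\times\{j\})^g=(\ga'_{\ga,\be,g}\cap\psi_\be^{-1}(j^\si))\times\{j^\si\}$, is then immediate from property~(\ref{prop2}) together with $\be^\si=\be$ (as $g\in G_\be$), which makes $\psi_{\be^\si}^{-1}(j^\si)=\psi_\be^{-1}(j^\si)$.

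Finally, part~3 follows by unwinding definitions. By Lemma~\ref{hasprops}.3 we have $B_\be(\ga)^g=B_{\be^\si}(\ga'_{\ga,\be,g})=B_\be(\ga'_{\ga,\be,g})$, using $\be^\si=\be$; and since the map $\ga\mapsto B_\be(\ga)$ is a bijection from $\LL_2$ onto $\BB_\be$, the block $B_\be(\ga)$ is fixed by $g$ if and only if $\ga'_{\ga,\be,g}=\ga$. By part~2, $\ga'_{\ga,\be,g}=\ga^{(g)\vp_\be}$, so this is precisely the condition that $(g)\vp_\be$ fixes $\ga$. I expect the only mildly delicate point to be bookkeeping in the homomorphism check — keeping straight which $h$-component of $g_2$ acts after the index has been moved by $\si_1$ — but this is exactly the standard composition rule in the wreath product $\Sym(\De_0)\wr\Sym(\De_1)$ combined with property~(\ref{prop1}), so it should go through cleanly.
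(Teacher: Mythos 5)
Your proposal is correct, and for the most part it follows the same route as the paper: well-definedness and bijectivity of $(g)\vp_\be$ via property~(\ref{prop1}) together with $\be^\si=\be$ (your observation that the classes $\psi_\be^{-1}(j)$, $j\in\be$, partition $\De_2$ and are permuted compatibly is exactly the content of the paper's injectivity argument), the homomorphism check by the same index chase ($\ell\in\psi_\be^{-1}(j)$ forces the second factor to act through its $j^{\si_1}$-component, matching the $j$-component of the product), and part~2 read off from~(\ref{gadash}) and~(\ref{prop2}), which then yields $(G_\be)\vp_\be\leq\Aut(\DD_2)$. The one genuine deviation is part~3: the paper argues directly that $\ga'_{\ga,\be,g}=\ga$ is equivalent to $g$ permuting the pieces $(\ga\cap\psi_\be^{-1}(j))\times\{j\}$, hence to $g\in G_{B_\be(\ga)}$, whereas you quote Theorem~\ref{hasprops}.3 to get $B_\be(\ga)^g=B_\be(\ga'_{\ga,\be,g})$ and then invoke injectivity of the map $\ga\mapsto B_\be(\ga)$ from $\LL_2$ to $\BB_\be$. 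That works and is arguably slicker, but the injectivity you assert deserves its one-line justification: $\ga$ is recoverable from $B_\be(\ga)$ because the fibre of $B_\be(\ga)$ over $j\in\be$ is the point set of the unique block of the parallel class $\psi_\be^{-1}(j)$ that it determines (or is empty), and as $j$ ranges over $\be$ these classes exhaust $\PP_0$. With that sentence added, your argument is complete; the paper's version of part~3 avoids this step by working with the pieces of $B_\be(\ga)$ directly, at the cost of a slightly longer equivalence chain.
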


\begin{proof}
First note that, for a given $\ell$, there is a unique $P\in\PP_0$ such that
$\ell\in P$, and $j=\psi_\be(P)$ is the unique element of $\be$ such that
$\psi_\be^{-1}(j)=P$. Hence $j$ is the unique element of $\be$ such that
$\psi_\be^{-1}(j)$ contains $\ell$.
By Lemma~\ref{hasprops}.1, $h_j\in\Aut^*(\DD_0)$, and hence $(\ell)
(g)\vp_\be=\ell^{h_j}$ is an element of $\De_2$. 
Thus $(g)\vp_\be$ is a well-defined map $\De_2\rightarrow \De_2$.

To prove that $(g)\vp_\be$ is one-to-one, suppose that $(\ell_1)(g)\vp_\be
=(\ell_2)(g)\vp_\be$, where $\ell_1,\ell_2\in\De_2$. 
Then, for $i=1,2$, $\ell_i\in P_i:=\psi_\be^{-1}(j_i)$ for a unique 
$j_i\in\be$, and 
\[
\ell_1^{h_{j_1}}=(\ell_1)(g)\vp_\be=(\ell_2)(g)\vp_\be=\ell_2^{h_{j_2}}\in P
\]
for some $P\in\PP_0$. This implies that, for $i=1,2$, $P=P_i^{h_{j_i}}$, or
equivalently, $P=(\psi_\be^{-1}(j_i))^{h_{j_i}}$ which by (\ref{prop1}) is 
equal to $\psi_\be^{-1}(j_i^\si)$ since $\be^\si=\be$. Since $\psi_\be$ is
a bijection this means that $j_1^\si=j_2^\si$, and since $\si$ is bijective,
$j_1=j_2$, so $P_1=P_2$. Then, since $\ell_1^{h_{j_1}}=\ell_2^{h_{j_2}}=
\ell_2^{h_{j_1}}$ and $h_{j_1}$ acts faithfully on $\cup_{P'\in\PP_0}P'$,
it follows that $\ell_1=\ell_2$. Thus $(g)\vp_\be$ is one-to-one, and hence is
a permutation of $\De_2$ since $\De_2$ is finite.
 
Next we prove that $(g)\vp_\be$ leaves $\LL_2$ invariant, and hence lies 
in $\Aut(\DD_2)$. Let $\ga\in\LL_2$.  
Then $\ga$ is the set of all $\ell_P:=\ga\cap P$, 
for $P\in\PP_0^{(\ga)}$. By the definition of $\vp_\be$, 
for each $P\in\PP_0^{(\ga)}$, $(\ell_P)(g)\vp_\be
=\ell_P^{h_j}$ where
$j=\psi_\be(P)$, and by (\ref{prop2}) and since $\be^\si=\be$, 
$\ell_P^{h_j}=(\ga\cap\psi_\be^{-1}(j))^{h_j}=\ga'_{\ga,\be,g}\cap
\psi_\be^{-1}(j^\si)$ and $(\ell_P\times\{j\})^g=\ell_P^{h_j}\times\{j^\si\}$. 
Thus $(\ga)(g)\vp_\be$ is the set of 
$\ga'_{\ga,\be,g}\cap\psi_\be^{-1}(j^\si)$ for all $j\in \psi_\be
(\PP_0^{(\ga)})$, and since $\ga'_{\ga,\be,g}\in\LL_2$ and has the same 
size as $\ga$, it follows that 
$(\ga)(g)\vp_\be=\ga'_{\ga,\be,g}\in\LL_2$. Thus part (2) is proved.

In particular, 
$(g)\vp_\be$ fixes $\ga$, that is to say, $\ga'_{\ga,\be,g}=\ga$,
if and only if $\si$ leaves $\PP_0^{(\ga)}$ invariant and 
$(\ga\cap\psi_\be^{-1}(j))^{h_j}=\ga\cap\psi_\be^{-1}(j^\si)$ 
whenever $\psi_\be^{-1}(j)\in\PP_0^{(\ga)}$. This is equivalent to
the property that $g$ permutes the set
$\{(\ga\cap\psi_\be^{-1}(j))\times\{j\}\,|\,\psi_\be^{-1}(j)\in
\PP_0^{(\ga)}\}$,
which, in turn, is equivalent to $g\in G_{B_\be(\ga)}$. Thus part (3)
is proved.

Finally we prove that $\vp_\be$ is a homomorphism. Let $g, x\in G_\be$ 
with $g$ as in the statement and $x=(y_1,\dots,y_{v_1})\tau$. Let $\ell, 
P, j$ be as in the first paragraph. Then $((\ell)(g)\vp_\be)
(x)\vp_\be=(\ell^{h_j})(x)\vp_\be=(\ell^{h_j})^{y_k}$ where $\ell^{h_j}\in
\psi_\be^{-1}(k)$. Now $\ell\in\psi_\be^{-1}(j)$, so $\ell^{h_j}\in 
(\psi_\be^{-1}(j))^{h_j}$ which by (\ref{prop1}) is equal to 
$\psi_\be^{-1}(j^\si)$. Thus $k=j^\si$ and so $((\ell)(g)\vp_\be)
(x)\vp_\be=\ell^{h_jy_{j\si}}$.
Since $gx=(h_1y_{1\si^{-1}},\dots,h_{v_1}y_{v_1\si^{-1}})
\si\tau$, it follows that $((\ell)(g)\vp_\be)
(x)\vp_\be=(\ell)(gx)\vp_\be$, and since this holds for all $\ell$ we 
have $(g)\vp_\be(x)\vp_\be=(gx)\vp_\be$, so $\vp_\be$ is a homomorphism. 
\end{proof}

Let $\pi:G\rightarrow\Sym(\De_1)$ and, for $1\leq j\leq v_1$ let $\pi_j:
G_j\rightarrow \Sym(\De_0)$ be defined by $(g)\pi=\si$ and $(g)\pi_j=h_j$
for $g=(h_1,\dots,h_{v_1})\si$ in $G$ or $G_j$ respectively.

\begin{theorem}\label{flagtr}
The group $G$ is flag transitive on $\DD$ if and only if the following four conditions all hold.
\begin{enumerate} 
\item $(G)\pi$ is flag-transitive on $\DD_1$; 
\item for each $j\in\De_1$, $(G_j)\pi_j$ is flag-transitive
on $\DD_0$;
\item for each $\be\in\LL_1$, $(G_\be)\vp_\be$ is flag-transitive 
on $\DD_2$;
\item for $\be\in\LL_1, j\in\be, \ga\in\LL_2$ such that $\ga\cap\psi_\be^{-1}(j)\ne\emptyset$, the group $(G_{\be,\ga,j})\pi_j$ is transitive on the block
$\ga\cap\psi_\be^{-1}(j)\in\LL_0$.
\end{enumerate}
\end{theorem}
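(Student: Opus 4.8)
The plan is to prove both directions by tracking what a flag of $\DD$ looks like and using the structural results already established, particularly Lemma~\ref{hasprops} (which identifies the $h_j$ and $\si$ components) and Lemma~\ref{d2} (which gives the homomorphism $\vp_\be$ and, crucially, part (3) identifying the stabiliser $G_{B_\be(\ga)}$ with the $\vp_\be$-preimage of $G_{\be,\ga}$). A flag of $\DD$ is a pair $\big((\de,i),\,B_\be(\ga)\big)$ with $i\in\be$ (forcing $\psi_\be^{-1}(i)\in\PP_0^{(\ga)}$) and $\de\in\ga\cap\psi_\be^{-1}(i)$. By Lemma~\ref{hasprops}.3, $g=(h_1,\dots,h_{v_1})\si$ sends this flag to $\big((\de^{h_i},i^\si),\,B_{\be^\si}(\ga'_{\ga,\be,g})\big)$, and by Lemma~\ref{d2}.2 the action on the $\DD_2$-side is exactly $(g)\vp_\be:\ga\mapsto\ga'_{\ga,\be,g}$. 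So the orbit structure of $G$ on flags of $\DD$ factors through (i) the action of $(G)\pi$ on $\LL_1$, i.e.\ on blocks of $\DD_1$; (ii) for fixed $\be$, the action on the point $i\in\be$, i.e.\ the flag $(i,\be)$ of $\DD_1$; (iii) for fixed $(\be,i)$, the action of $(G_{\be})\vp_\be$ on $\LL_2$, refined to flags of $\DD_2$ once we remember the point $\psi_\be^{-1}(i)\in\ga$; and (iv) for fixed $(\be,i,\ga)$, the action of $(G_{\be,\ga,j})\pi_j$ (here $j=i$) on the points of the block $\ga\cap\psi_\be^{-1}(i)$ of $\DD_0$.

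\emph{Necessity.} Assume $G$ is flag-transitive on $\DD$. For (1): the map $\pi$ sends a flag of $\DD$ to (essentially) a flag $(i,\be)$ of $\DD_1$ — more precisely $g$ maps $(i,\be)\mapsto(i^\si,\be^\si)$ and $i\in\be$ — and every flag of $\DD_1$ arises this way because every block $\be$ contains some $j$ for which some $\ga\in\LL_2$ meets $\psi_\be^{-1}(j)$ (every point of $\DD_0$ lies in a block of $\DD_2$, since $\DD_2$ is a transversal design on $\LL_0$), so (1) follows. For (2): fix $j$; given two flags $(\de_1,\ell_1),(\de_2,\ell_2)$ of $\DD_0$ with $\de_i\in\ell_i$, realise each as a flag of $\DD$ with second coordinate $j$ — choose $\be$ with $j\in\be$ and $\psi_\be^{-1}(j)$ the parallel class of $\ell_i$ (this needs a single $\be$ working for both $\ell_1,\ell_2$; since $\ell_1,\ell_2$ may lie in different parallel classes $P_1,P_2$, and for each $P$ there is a unique $\be$ with $\psi_\be(P)=j$, one must instead argue via transitivity on the parallel-class level first, then within a class) — apply flag-transitivity of $G$ on $\DD$ and the fact that an element fixing $j$ and carrying one $\DD$-flag to the other projects under $\pi_j$ to the required $\DD_0$-element; conclude $(G_j)\pi_j$ is flag-transitive. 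For (3): fix $\be$; a flag of $\DD_2$ is a pair $(\ell,\ga)$ with $\ell\in\ga$, $\ell\in\psi_\be^{-1}(j)$ for the unique such $j\in\be$; lift to the $\DD$-flag $\big((\de,j),B_\be(\ga)\big)$ for any $\de\in\ell$; an element of $G$ between two such lifts lies in $G_\be$ (since $\si$ fixes $\be$), and by Lemma~\ref{d2}.2 its image under $\vp_\be$ carries $(\ell_1,\ga_1)$ to $(\ell_2,\ga_2)$; hence (3). For (4): fix $\be,\ga,j$; given $\de_1,\de_2\in\ga\cap\psi_\be^{-1}(j)$, the $\DD$-flags $\big((\de_i,j),B_\be(\ga)\big)$ are carried into one another by some $g\in G$; such $g$ fixes $\be$ (its image block is $B_\be(\ga)$), fixes $j$, and fixes $\ga$ (by Lemma~\ref{d2}.3, since $g\in G_{B_\be(\ga)}$); thus $g\in G_{\be,\ga,j}$ and $(g)\pi_j=h_j$ sends $\de_1\mapsto\de_2$, giving (4).

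\emph{Sufficiency.} Assume (1)--(4). Given two flags $F_1=\big((\de_1,i_1),B_{\be_1}(\ga_1)\big)$ and $F_2=\big((\de_2,i_2),B_{\be_2}(\ga_2)\big)$ of $\DD$, I build an element of $G$ carrying $F_1$ to $F_2$ by successive approximation. By (1) there is $g_1\in G$ with $(\be_1,i_1)^{g_1\pi}=(\be_2,i_2)$ as a flag of $\DD_1$; replacing $F_1$ by $F_1^{g_1}$ we may assume $\be_1=\be_2=:\be$ and $i_1=i_2=:i$. By (3), applied to this $\be$, there is $g_2\in G_\be$ with $(g_2)\vp_\be$ carrying the flag $(\psi_\be^{-1}(i)\cap\ga_1,\ga_1)$ of $\DD_2$ to $(\psi_\be^{-1}(i)\cap\ga_2,\ga_2)$ — wait, one must be careful that $\vp_\be$ acts on points of $\De_2$ via the $h_j$ and can change $i$; since $g_2\in G_\be$, $\si$ fixes $\be$ but may move $i$ within $\be$, so to keep $i$ fixed one intersects the hypotheses, using that $(G_{\be})\vp_\be$ is flag-transitive on $\DD_2$ and separately arranging $i$ via (2) restricted to $G_{\be}$; the clean route is: first use (3) to fix $\ga$ and the point $\ell:=\psi_\be^{-1}(i')\cap\ga$ in $\DD_2$, noting $\ell$ determines its parallel class and hence (given $\ga$) forces a specific $j'\in\be$, then match $j'$ to $i$. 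After this reduction we may assume $\be_1=\be_2=\be$, $\ga_1=\ga_2=\ga$, $i_1=i_2=i$, so $F_1,F_2$ differ only in $\de_1,\de_2\in\ga\cap\psi_\be^{-1}(i)$. Now apply (4) with $j=i$: there is $g_3\in G_{\be,\ga,j}$ with $(g_3)\pi_j=h_j$ sending $\de_1\mapsto\de_2$; since $g_3$ fixes $\be$ (and hence $\BB_\be$), fixes $j$, and by Lemma~\ref{d2}.3 fixes $B_\be(\ga)$, the element $g_3$ fixes the block of $F_1$ and moves its point to that of $F_2$. Composing the $g_i$ gives the desired element of $G$.

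\emph{Main obstacle.} The delicate point throughout is the coupling between the index $j\in\De_1$ and the permutation $\si$: an element of $G_\be$ need not fix a chosen point $i\in\be$, so the four reductions must be performed in the right order, peeling off $\DD_1$ first (down to fixing the flag $(i,\be)$), then $\DD_2$ while staying inside $G_\be$ and controlling which $j$ the relevant point of $\ga$ sits over, then finally $\DD_0$ inside the simultaneous stabiliser $G_{\be,\ga,j}$. Verifying that at each stage enough symmetry survives — i.e.\ that the relevant stabiliser subgroup still satisfies the hypothesis we need — is the real content; everything else is bookkeeping via Lemmas~\ref{hasprops} and \ref{d2}.
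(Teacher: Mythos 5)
Your overall strategy coincides with the paper's (reduce a flag of $\DD$ in stages through $\DD_1$, $\DD_2$, $\DD_0$, using Theorem~\ref{hasprops} and Lemma~\ref{d2}), but two steps are left genuinely unclosed. First, necessity of condition~2. You lift two flags $(\de_1,\ell_1)$, $(\de_2,\ell_2)$ of $\DD_0$ to flags of $\DD$ over the same $j$, then worry that the lifts involve different blocks $\be_1\ne\be_2$ of $\LL_1$ and propose, without carrying it out, to ``argue via transitivity on the parallel-class level first''. The obstacle is spurious and the detour is never executed: an element $g$ of $G$ carrying one lifted flag to the other need not fix $\be$ at all. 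It automatically fixes $j$ (both flag points lie in the part $\De_0\times\{j\}$ of $\XX$), so $g\in G_j$; and since $B_{\be_1}(\ga_1)$ meets $\De_0\times\{j\}$ exactly in $\ell_1\times\{j\}$, its image $B_{\be_2}(\ga_2)$ meets $\De_0\times\{j\}$ in $\ell_1^{h_j}\times\{j\}$, which must equal $\ell_2\times\{j\}$; hence $(g)\pi_j$ maps the flag $(\de_1,\ell_1)$ to $(\de_2,\ell_2)$, whether or not $\be_1=\be_2$. (The paper sidesteps the issue differently, by fixing $(\de,j)$ and projecting the transitive action of $G_{j,\de}$ on the blocks of $\DD$ through $(\de,j)$ onto the blocks of $\DD_0$ through $\de$.) As written, your argument for condition~2 is incomplete.

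Second, sufficiency. Your first ordering (align the flag $(i,\be)$ of $\DD_1$ before touching $\DD_2$) fails for the reason you yourself note: the subsequent element of $G_\be$ may move $i$. The ``clean route'' you then sketch---align $\be$ only, use condition~3 to align the $\DD_2$-flag $(\ell,\ga)$, ``then match $j'$ to $i$''---is exactly the paper's route, but the decisive final point is asserted rather than proved: after the $\DD_2$-flag is aligned, the $\De_1$-coordinate matches \emph{automatically}. Indeed, if $g_2\in G_\be$ with $(g_2)\vp_\be$ carrying $(\ell_1,\ga_1)$ to $(\ell_2,\ga_2)$, then by (\ref{prop1}) the parallel class $(\psi_\be^{-1}(i_1))^{h_{i_1}}=\psi_\be^{-1}(i_1^\si)$ contains $\ell_1^{h_{i_1}}=\ell_2$, whose class is $\psi_\be^{-1}(i_2)$; since $\psi_\be$ is a bijection, $i_1^\si=i_2$, and $\de_1^{h_{i_1}}\in\ell_2$, so only condition~4 remains to be applied. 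If ``match $j'$ to $i$'' was intended as a further group-theoretic reduction, no hypothesis supplies it while fixing $\ga$ and $\ell$; the point is precisely that none is needed (this is how the paper proceeds, mapping $(\ga\cap\psi_\be^{-1}(j'))\times\{j'\}$ to $(\ga\cap\psi_\be^{-1}(j))\times\{j\}$ via Lemma~\ref{d2}). With these two repairs your argument becomes essentially the paper's proof; without them it has real gaps.
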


\begin{proof}
Suppose first that $G$ is flag-transitive on $\DD$. Then in particular $G$ is 
transitive on the point set $\De_0\times\De_1$, so $(G)\pi$ is transitive on 
$\De_1$ and for each $j\in\De_1$, $(G_j)\pi_j$ is transitive on $\De_0$. Let 
$\de\in\De_0$. Then $G_{(\de,j)}=G_{j,\de}$ is transitive on the set of blocks 
that contain $(\de,j)$. 
By definition of $\DD$ these are the blocks $B_{\be}(\ga)$ such 
that $j\in\be$ and $\de\in \ga\cap\psi_\be^{-1}(j)$. Now for each $\be\in
\LL_1$ containing $j$, $\psi_\be^{-1}(j)\in\PP_0$ contains a unique block 
$\ell\in \LL_0$ containing $\de$, and  there exists $\ga\in\LL_2$ such that 
$\ga\cap P=\ell$. Thus $B_\be(\ga)$ contains $(\de,j)$ and it follows that
$G_{j,\de}$ and hence also $G_j$, acts transitively on the set of blocks $\be
\in\LL_1$ containing $(\de,j)$. Thus $(G)\pi$ is flag-transitive on $\DD_1$.

Further, each $\ell\in\LL_0$
containing $\de$ lies in a unique parallel class $P\in\PP_0$, and by the 
properties of the $(\psi_\be)_{\be\in\LL_1}$, there is a unique $\be\in\LL_1$
such that $\psi_\be(P)=j$. Also, as remarked above, there exists 
$\ga\in\LL_2$ such that $\ga\cap P
=\ell$. For this $\be$, and any such $\ga$, the block $B_\be(\ga)$ contains
$(\ga\cap\psi_\be^{-1}(j))\times\{j\}=\ell\times\{j\}$ which contains
$(\de,j)$. This implies that $G_{j,\de}$ is transitive on the set of blocks 
$\ell\in\LL_0$ containing $\de$, and hence that $(G_j)\pi_j$ is 
flag-transitive on $\DD_0$.

To examine the induced action on $\DD_2$, let $\be\in\LL_1$. By 
Lemma~\ref{hasprops}.3, $G$ permutes the sets $\BB_\be$ in the same way 
that it permutes $\LL_1$, and hence $G_\be$ is
transitive on $\BB_\be$. The stabiliser in $G_\be$ of $B_\be(\ga)\in\BB_\be$
is equal to $G_{B_\be(\ga)}$ and hence is transitive on the points 
$(\de,j)$ of $B_\be(\ga)$. This implies that $G_{B_\be(\ga)}$ is transitive 
on the set $\{(\ga\cap\psi_\be^{-1}(j))\times\{j\}\,|\,
\ga\cap\psi_\be^{-1}(j)\ne\emptyset\}$. 
By Lemma~\ref{d2}, $(G_{B_\be(\ga)})\vp_\be$ is the 
stabiliser in $(G_\be)\vp_\be$ of the block $\ga\in\LL_2$, and therefore
$(G_\be)\vp_\be$ is transitive on the points  
of $\DD_2$ incident with $\ga$, namely $\ga\cap
\psi_\be^{-1}(j)$ where $\psi_\be^{-1}(j)\in\PP_0^{(\ga)}$. 
Thus $(G_\be)\vp_\be$ is
flag-transitive on $\DD_2$. Also the stabiliser $G_{\be,\ga,j}$ in 
$G_{B_\be(\ga)}$ of the subset $(\ga\cap\psi_\be^{-1}(j))\times\{j\}$ is
transitive on it, and hence $(G_{\be,\ga,j})\pi_j$ is transitive on
$\ga\cap\psi_\be^{-1}(j)$. Thus if $G$ is flag-transitive on $\DD$ then
properties 1--4 all hold.

Conversely suppose that the four properties hold for $G$, and consider two 
flags $F=((\de,j),B_\be(\ga))$ and $F'=((\de',j'),B_{\be'}(\ga'))$ of $\DD$.
Since $(G)\pi$ is transitive on the blocks of $\DD_1$ we may replace $F'$ by
its image under some element of $G$ if necessary and assume that $\be'=\be$.
Next since $(G_\be)\vp_\be$ is transitive on the blocks of $\DD_2$, we may 
further assume that $\ga'=\ga$. Then, since $F, F'$ are flags, we have
$(\de,j)\in (\ga\cap\psi_\be^{-1}(j))\times\{j\}$ and
$(\de',j')\in (\ga\cap\psi_\be^{-1}(j'))\times\{j'\}$. 
Since $(G_\be)\vp_\be$ is flag-transitive on
$\DD_2$, $((G_\be)\vp_\be)_\ga$, which is equal to $(G_{B_\be(\ga)})\vp_\be$
by Lemma~\ref{d2}, is transitive on the points of $\DD_2$ incident with 
$\ga$, and hence some $g\in G_{B_\be(\ga)}$ maps $(\ga\cap\psi_\be^{-1}(j'))
\times\{j'\}$ to  $(\ga\cap\psi_\be^{-1}(j))\times\{j\}$. Thus we may assume 
further that $j'=j$ so that $\de,\de'\in\ga\cap\psi_\be^{-1}(j)$. Then since
$(G_{\be,\ga,j})\pi_j$ is transitive on $\ga\cap\psi_\be^{-1}(j)$, some 
element of $(G_{\be,\ga,j})\pi_j$ maps $F'$ to $F$. Thus $G$ is 
flag-transitive on $\DD$. 
\end{proof}

\section{The $2$-$(96,20,4)$ designs revisited}\label{s:96}

The paper \cite{lpr} describes four flag-transitive symmetric $2$-$(96,20,4)$
designs (three of which were previously known), and shows
that these are the only possible examples of such designs.

Our construction can potentially be applied to these designs, with
\begin{itemize}
\item $\mathcal{D}_0$ is the unique $2$-$(16,4,1)$ design, the affine plane
of order $4$ (which has a unique resolution), cf.~\cite[Lemma 3.2]{lpr};
\item $\mathcal{D}_1$ is the trivial $2$-$(6,5,4)$ design;
\item $\mathcal{D}_2$ is the dual of $D_0$; its groups correspond to the
parallel classes in $\mathcal{D}_0$;
\item the maps $\psi_\beta$ are described by a Latin square of order~$6$.
\end{itemize}
We applied our construction to these data, with random Latin squares generated
by the Jacobson--Matthews algorithm~\cite{jm}. A preliminary run
found $35$ non-isomorphic $2$-$(96,20,4)$ designs, all with relatively
large automorphism groups (of order at least $48$), including one
flag-transitive design (number~$1$ in the list in \cite{lpr}, with automorphism
group $2^8:((3\cdot A_6):2)$). We believe an exhaustive search is feasible.

We do not currently know whether the other three designs in \cite{lpr} can
be obtained from our construction.

\section{Examples in symplectic designs}
\label{s:symp}

The symplectic designs $S^\epsilon(n)$, for $\epsilon\in\{{+},{-}\}$ and 
$n\ge2$, form a class of symmetric designs with many remarkable properties. 
(See~\cite{cs} for a description of these designs.)
The designs $S^+(n)$ and $S^-(n)$ are complementary, and $S^\epsilon(n)$ is a
$2$-$(2^{2n},2^{n-1}(2^n+\epsilon),2^{n-1}(2^{n-1}+\epsilon))$ design.
The automorphism group of $S^\epsilon(n)$ is $2^{2n}:\Sp(2n,2)$.

We show here that $S^-(n)$ admits a
flag-transitive but point-imprimitive subgroup $2^{2n}:\GL(n,2)$, preserving
the structure of a grid on the point set.

We begin by giving two constructions of the symplectic designs.

\begin{construction}\label{con:symp1}{\rm First, the
 ``standard'' construction in terms of quadratic forms.
Let $W$ be a $2n$-dimensional vector space over $\GF(2)$, and $f$ a fixed
symplectic form on $W$. There are $2^{2n}$ quadratic forms $Q$ which polarise
to $f$, that is to say, $Q(x+y)=Q(x)+Q(y)+f(x,y)$ for all $x,y\in W$. 
These fall into two \emph{types} called $+$ and $-$, where a form of type
$\epsilon$ has $2^{n-1}(2^n+\epsilon)$ zeros. (The forms of type $+$ have
Witt index~$n$, while those of type $-$ have Witt index~$n-1$. The type as we
have defined it is $(-1)^a$, where $a$ is the \emph{Arf invariant} of $Q$.
We denote the type of the form $Q$ by $\tp(Q)$.) Now the
points of the design are the vectors in $W$, and the blocks are the quadratic
forms which polarise to $f$; the vector $w$ and quadratic form $Q$ are
incident in $S^\epsilon$ if and only if $(-1)^{Q(w)}=\epsilon\tp(Q)$.}
\end{construction}

\begin{construction}\label{con:symp2}{\rm
Now we give an alternative construction which is more explicit and easier to
work with. Take $V$ to be $n$-dimensional over
$\GF(2)$, and $V^*$ its dual space: we represent $V$ and $V^*$ by row vectors
with the action of $V^*$ on $V$ being given by $v.x=vx^\top$. Now we take
$W=V\oplus V^*$; both points and blocks of the design will be indexed by $W$,
with the point $P(v,x)$ and the block $B(w,y)$ incident if and only if
$(v+w)(x+y)^\top=0$.

First we identify this incidence structure with the symplectic design of 
$+$ type, $S^+(n)$.
The block $B(0,0)$ is incident with all points $P(v,x)$ satisfying $vx^\top=0$;
in coordinates, this is $\sum v_ix_i=0$, and $Q(v,x)=\sum v_ix_i$ is a 
quadratic form of type $+$ on $W$ (since both $V$ and $V^*$ are totally 
singular subspaces). Any other quadratic form which polarises to the same 
bilinear form can be written as
\begin{eqnarray*}
Q_{a,b}(v,x) &=&\sum v_ix_i + \sum b_iv_i^2+\sum a_ix_i^2\\ 
&=& \sum v_ix_i + \sum b_iv_i+\sum a_ix_i\\
&=& vx^\top + vb^\top + ax^\top
\end{eqnarray*}
where $a=(a_1,\ldots,a_n)\in V$ and $b=(b_1,\ldots,b_n)\in V^*$. Now
$Q_{a,b}(v,x)+ab^\top = (v+a)(x+b)^\top$. Thus the type of
this form is $+$ if $ab^\top=0$ and $-$
if $ab^\top=1$, that is, $\tp(Q_{a,b})=(-1)^{ab^\top}$. Now we see that
$(v,x)$ is incident with $Q_{a,b}$ in $S^+(n)$
\begin{center}
\begin{tabular}{ll}
if and only if &$(-1)^{Q_{a,b}(v,x)}=\tp(Q_{a,b})=(-1)^{ab^\top}$\\
if and only if &$Q_{a,b}(v,w)=ab^\top$\\
if and only if &$(v+a).(x+b)^\top=0$\\
if and only if &$P(v,x)$ is incident with $B(a,b)$
\end{tabular}
\end{center}
as required.}
\end{construction}

Now it is clear that the translation group of $W$, and the group $\GL(V)$,
acting in the natural way on $V$ and by the inverse-transpose action on $V^*$,
preserve the incidence structure and generate their semidirect product
$G=2^{2n}:\GL(n,2)$. Clearly $\GL(V)$ is imprimitive on $W$; 
the families of cosets of
$V$ and of $V^*$ are systems of imprimitivity, giving $W$ the structure of
a square grid.

It is clear that $\GL(V)$ acts in the same way on points and blocks, by
$A:P(v,x)\mapsto P(vA,x(A^{-1})^\top)$ (and similarly for blocks). In particular, it fixes the block $B(0,0)$. Now
$\GL(V)$ has five orbits on $W$, namely $\{(0,0)\}$, $\{(0,x):x\ne0\}$,
$\{(v,0):v\ne0\}$, $\{(v,x):v,x\ne0,vx^\top=0\}$, and
$\{(v,x):vx^\top=1\}$. The points corresponding to the first four orbits 
form the block $B(0,0)$, and the fifth is its complement (a block of $S^-(n)$).
So $G$ is flag-transitive on $S^-(n)$.

The block $P=B(0,0)$ of $S^-(n)$ consists of the points $P(v,x)$ for
which $vx^\top=1$. If we partition the space $W$ into cosets of $\{(0,x):
x\in V\}$, we see that this block contains no points of the subspace, and
meets every other coset in an affine hyperplane. So the ingredients for our
construction are
\begin{itemize}
\item $\mathcal{D}_0$ is the design of points and affine hyperplanes of the
affine space $\mathrm{AG}(n,2)$ based on the dual space $V^*$ of the vector
space $V$ (but note that we have an inner product which identifies $V$ with
$V^*$, so we do not have to distinguish between them);
\item $\mathcal{D}_1$ is the trivial $2$-$(2^n,2^n-1,2^n-2)$ design,
with point set $V$;
\item $\mathcal{D}_2$ is the dual of $D_0$;
\item there is a natural indexing of $\mathcal{P}_0$ by non-zero vectors
of $V$, and each block $\beta$ of $\mathcal{D}_1$ is the complement of a
single vector $v_\beta$; we take $\psi_\beta$ to map the parallel class
indexed by $v$ to the vector $v+v_\beta$. (In other words, the Latin square
used in the construction is the addition table of the vector space $V$.)
\end{itemize}

\paragraph{Remark} The group $2^{2n}:\GU(n,2)$ is flag-transitive on the
design $S^+(n)$, but is point-primitive.

\begin{question}\rm
Are there other examples of flag-transitive automorphism
groups of symmetric designs which preserve a grid structure?
\end{question}

In this connection,we have the following result.

\begin{proposition}
Let $D$ be a $2$-$(v,k,\lambda)$ design, with $v=n^2$, admitting a 
block-transitive subgroup of $S_n\times S_n$ (with the product action).
Then $n+1$ divides $\binom{k}{2}$.
\end{proposition}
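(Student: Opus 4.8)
The idea is to count incident (point, block) pairs in a way that exposes divisibility by $n+1$. The key geometric fact is that a block-transitive subgroup $H\le S_n\times S_n$ acting in the product action on $V=[n]\times[n]$ preserves the two grid partitions (into ``rows'' and ``columns''), so every block $B$ meets the rows in a well-defined multiset of sizes, and $H$ permutes the blocks transitively. First I would fix a block $B$ with $|B|=k$ and let $a_1,\dots,a_n$ be the sizes of its intersections with the $n$ rows, and $c_1,\dots,c_n$ the sizes of its intersections with the $n$ columns, so $\sum a_i=\sum c_j=k$. The quantity I want to control is $\binom{k}{2}$, the number of pairs of points of $B$, which I would split as
\[
\binom{k}{2}=\sum_{i}\binom{a_i}{2}+\sum_{j}\binom{c_j}{2}+N(B),
\]
where $N(B)$ is the number of pairs of points of $B$ lying in neither a common row nor a common column (a ``transversal'' pair).

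Next I would bring in the $2$-design property and block-transitivity. Summing $\binom{k}{2}$ over all $b$ blocks counts, with multiplicity $\lambda$, all $\binom{n^2}{2}$ pairs of points; splitting each pair according to whether it is a row-pair, a column-pair, or a transversal pair, and using that $H$ is transitive on blocks (hence the multiset $\{a_1,\dots,a_n\}$ is the same for every block, and likewise $\{c_1,\dots,c_n\}$), I would get $b\sum_i\binom{a_i}{2}=\lambda\cdot n\binom{n}{2}$ (the number of row-pairs, each counted $\lambda$ times) and similarly for columns, and $b\,N(B)=\lambda\cdot\binom{n}{2}^2\cdot\text{(something)}$ — more precisely $\lambda$ times the number of transversal pairs, which is $\binom{n^2}{2}-2n\binom{n}{2}=\binom{n}{2}\cdot n^2$. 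Wait: the cleaner route is to work modulo $n+1$ directly rather than solving for $b$. The relation $b\binom{k}{2}=\lambda\binom{n^2}{2}$ together with $n^2-1=(n-1)(n+1)$ suggests that $n+1\mid \lambda\binom{n^2}{2}/\gcd$-type arguments are delicate; so instead I would use the block-transitivity to average and argue that the total pair-count contributed by each block is constant and divides evenly.

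The cleanest approach, and the one I would actually carry out: the orbits of $H$ on unordered pairs of points refine into row-pairs, column-pairs, and transversal pairs; $H$ acts on the set of $n^2$ cells with the two coordinate projections, and the stabiliser of a block permutes its cells. The point is that $H$, being transitive on the $b$ blocks, forces $b\mid$ (number of flags through a fixed point)$\times v$-type identities, but the truly relevant observation is: count flags. Each point lies in $r=\lambda(n^2-1)/(k-1)$ blocks, and $H$ transitive on blocks plus the grid structure gives that the average over blocks of $\sum_i\binom{a_i}{2}$ equals the total number of collinear (same-row) point-pairs times $\lambda$, divided by $b$, i.e. $\lambda n\binom{n}{2}/b$. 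Hence $b\mid \lambda n \binom{n}{2}$, and combined with $b\binom{k}{2}=\lambda\binom{n^2}{2}=\lambda\binom{n}{2}(n+1)n/(\ )$... I would then extract that $n+1$ divides $\binom{k}{2}$ from $\binom{n^2}{2}=\frac{n^2(n^2-1)}{2}=\frac{n^2(n-1)(n+1)}{2}$, so $(n+1)\mid\binom{n^2}{2}$ in a strong sense, and transitivity makes the per-block contribution $\binom{k}{2}=\lambda\binom{n^2}{2}/b$ inherit a factor of $n+1$ once one checks $b$ is coprime to the relevant part of $n+1$ — which is where block-transitivity (forcing $b$ to divide an expression with no $n+1$ factor) does the work.

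The main obstacle is organizing the divisibility bookkeeping so that the factor $n+1$ is not ``used up'' by $b$: I expect to need block-transitivity precisely to bound $b$ (it must divide $|H|\le (n!)^2$, but more usefully it must divide the number of transversal pairs through a suitable configuration), and then a gcd computation separating the $n+1$ part of $\binom{n^2}{2}$ from $b$. I would isolate this as the one real lemma; the pair-counting identities above are routine, and the grid-invariance of $H$ is immediate from the product action.
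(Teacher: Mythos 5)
There is a genuine gap, and it lies precisely in the step you deferred as ``the one real lemma''. The identities you wrote down at the start already finish the proof, with no further divisibility bookkeeping: the three classes of pairs (same row, same column, transversal) are exactly the orbitals of $S_n\times S_n$ on pairs of distinct points, hence each class is invariant under the block-transitive group; therefore the number of pairs of each class contained in a block is the \emph{same} for every block, say $x_1,x_2,x_3$, and double counting against the $2$-design condition gives $b\,x_i=\lambda\,|O_i|$ for each class. The class sizes are $n\binom{n}{2}$, $n\binom{n}{2}$ and $\binom{n^2}{2}-2n\binom{n}{2}=\tfrac{n^2(n-1)^2}{2}=2\binom{n}{2}^2$, i.e.\ in ratio $1:1:(n-1)$; so $x_1=x_2$, $x_3=(n-1)x_1$, and $\binom{k}{2}=x_1+x_2+x_3=(n+1)x_1$ with $x_1$ a nonnegative integer. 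This is exactly the paper's argument (which quotes \cite{cp} for the proportionality statement). Note also the arithmetic slip in your transversal count: it is $\tfrac{n^2(n-1)^2}{2}$, not $n^2\binom{n}{2}$; with your value the ratio comes out as $1:1:n$ and the factor $n+1$ never appears, so the slip is not harmless.

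The route you actually proposed to carry out --- extracting $n+1$ from $b\binom{k}{2}=\lambda\binom{n^2}{2}$ by bounding $b$ and separating the $(n+1)$-part by a gcd computation --- cannot be made to work and is not needed. Block-transitivity gives only that $b$ divides $|H|$ for some $H\le S_n\times S_n$, and $\gcd\bigl((n!)^2,\,n+1\bigr)>1$ whenever $n+1$ is composite, so there is no a priori coprimality between $b$ and $n+1$; moreover $\lambda$ on the right-hand side can itself carry factors of $n+1$. So the coprimality/``bound $b$'' lemma you planned to isolate has no proof in the generality required, and as written your argument stops exactly where the work has to happen. The fix is simply to return to your own per-orbital identities: the integrality of the per-block count $x_1$ (it counts pairs inside a block) is what replaces all of the gcd bookkeeping, and block-transitivity is used only to make the per-block counts constant, not to control $b$ arithmetically.
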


\begin{pf}
As in \cite{cp}, the images of the blocks of $D$ under
$S_n\times S_n$ are the blocks of a block-transitive $2$-design. The
$2$-design condition is equivalent to the condition that the numbers of
pairs of distinct points in a block $B$ lying in each orbital of 
$S_n\times S_n$ are in the same ratio as the sizes of these orbitals, that is,
$1:1:n-1$. Hence $n+1$ divides  $\binom{k}{2}$.
\end{pf}

\section{A sporadic example}\label{s:sporadic}

In this section we construct a flag-transitive, point-imprimitive
$2$-$(1408,336,80)$ design whose automorphism group is
$2^6:((3\cdot\mathrm{M}_{22}):2)$. We describe briefly the way this design
might be produced by our general construction, but for verification purposes
it is more convenient to give a more direct construction of the design.

The ingredients are as follows.
\begin{enumerate}
\item $\mathcal{D}_0$ is the design of points and planes in $3$-dimensional
affine space over $\GF(4)$. It is a $2$-$(64,16,5)$ design with $21$ parallel
classes of size~$4$. 
\item $\mathcal{D}_1$ is the trivial symmetric $2$-$(22,21,20)$ design.
\item $\mathcal{D}_2$ is the dual of $\mathcal{D}_0$. That is, its points
are the $84$ planes of $\AG(3,4)$, partitioned into $21$ groups (parallel
classes) of size $4$; for each point of the affine space, there is a block
of $\mathcal{D}_2$ consisting of the $21$ planes containing that point, with
one plane from each parallel class. Two non-parallel planes meet in four
points.
\item The construction of the bijections $\psi_\beta$ uses a $6$-dimensional
affine space admitting the group $2^{12}:(3\cdot\mathrm{M}_{22})$. It is
somewhat intricate: this group acts on a set of size $22$ (which we identify
with the point set of $\mathcal{D}_1$); the stabiliser of a point has a subgroup
$2^6:(3\cdot\mathrm{M}_{21})$ which acts on the design $\mathcal{D}_0$.
The latter group acts on $21$ points as the stabiliser of a point in
$\mathrm{M}_{22}$, and we might use this to construct the bijection.
Instead, we proceed by a group-theoretic construction as follows.
\end{enumerate}

The group $G=3\cdot\mathrm{M}_{22}$ (a triple cover of the Mathieu group
$\mathrm{M}_{22}$) has a $6$-dimensional irreducible module $V$ of degree~$6$
over $\GF(4)$. (Matrices generating this group can be found in the on-line
Atlas of Finite Group Representations~\cite{atlas}.) The module is reducible
for the subgroup $H=3\cdot\mathrm{M}_{21}\cong\SL(3,4)$, which has a
$3$-dimensional submodule $W$.
The subgroup $WH$ has index $4^3\cdot22=1408$ in $VG$, and the action
of $VG$ on the cosets of $WH$ is imprimitive with $22$ blocks of size~$64$
(corresponding to the cosets of $VH$).

Now the stabiliser of a point in this group has orbit lengths $1$, $63$
(these two form a block), $336$ and $1008$ (these meet each remaining block
in $16$ and $48$ points respectively). The translates of the orbit of size
$336$ form a symmetric $2$-$(1408,336,80)$ design, which is obviously
flag-transitive and point-imprimitive. (The calculations to verify this
assertion were done using \textsf{GAP}~\cite{gap}. Most of the time taken by
the computation is in constructing the action of $VG$ on the cosets of $WH$.)
\textsf{GAP} also verifies that the group used in the construction is a 
subgroup of index~$2$ in the full automorphism group of the design, which
is itself point-imprimitive. (The outer automorphism of order $2$ restricts
to a field automorphism in $\Sigma\mathrm{L}(3,4)$, and so does not show up
in the linear action of $\mathrm{SL}(3,4)$ on the module.)

It would be useful to identify explicitly the bijections $\psi_\beta$ (or
equivalently, the appropriate Latin square of order $22$) so as to give an
alternative construction of the design. Note, however, that there are many
different Latin squares of order $22$, and so we expect to find many
non-isomorphic symmetric $2$-$(1408,336,80)$ designs.

\section{Open problems}

To conclude, here are some open problems.

\begin{question}\rm
Does every flag-transitive, point-imprimitive symmetric $2$-design arise from
our construction? The $2$-$(96,20,4)$ designs are the obvious test case for
this question.
\end{question}

\begin{question}\rm
In all the examples of our construction of a symmetric $2$-design, the
design $\mathcal{D}_2$ is the dual of $\mathcal{D}_0$, and has the property
that every block meets every group in a unique point. Does this necessarily
happen? Note that the points of $\mathcal{D}_2$ are identified with the blocks
of $\mathcal{D}_0$, and the partition of the point set into groups corresponds
to the given resolution of $\mathcal{D}_0$. In all our examples, the equality
$k_2=r$ holds, and this is equivalent to the property that every block of
$\mathcal{D}_2$ meets every group.
\end{question}

\begin{question}\rm
There is an unexplained correspondence between our (symplectic and sporadic)
examples, and some $2$-transitive symmetric $2$-designs which are extendable
to $3$-designs admitting transitive extensions of automorphism groups of the
smaller designs. 

In the sporadic example, $\mathcal{D}_0$ is affine space $\mathrm{AG}(3,4)$,
which induces the projective plane $\mathrm{PG}(2,4)$ on the points at
infinity; the lines of this plane correspond to the parallel classes of blocks
of $\mathcal{D}_0$ (forming the set $\mathcal{P}_0$). Now this plane has an
extension to a $3$-$(22,6,1)$ design with automorphism group $M_{22}:2$, which
is the group we have acting on the points of $\mathcal{D}_1$. In a similar
way, in the symplectic examples, $\mathcal{D}_0$ is the affine space
$\mathrm{AG}(n,2)$, whose hyperplane at infinity carries $\mathrm{PG}(n-1,2)$;
this design can be extended back to $\mathrm{AG}(n,2)$ admitting the affine
group.

Note however that the designs here (the Witt $3$-design for $M_{22}:2$ and
the affine space) play no role in the construction of our symmetric designs.

The problem is to explain what is going on here!
\end{question}

The computations in this paper were performed using \textsf{GAP}~\cite{gap}
and its \textsf{DESIGN} package~\cite{design}.

\end{document}